\documentclass[11pt]{amsart}

\usepackage{amsmath}
\usepackage{amssymb}
\usepackage{graphicx}
\usepackage{amsthm}
\usepackage{subfig, amsmath, hyperref}
\usepackage{mathrsfs}
\usepackage[usenames, dvipsnames]{xcolor}
\def\bel{\begin{equation}\label}
\def\eeq{\end{equation}}
\def\bel{\begin{equation}\label}
\def\eeq{\end{equation}}

\newtheorem{theorem}{Theorem}[section]

\newtheorem{lemma}[theorem]{Lemma}
\newtheorem{proposition}[theorem]{Proposition}
\theoremstyle{definition}
\newtheorem{definition}[theorem]{Definition}
\newtheorem{example}[theorem]{Example}
\newtheorem{remark}[theorem]{Remark}
\newtheorem{Claim}{Claim}[section]

\numberwithin{equation}{section}

\def \M {{M_0}}

\def \R {{\mathbb R}}
\def \k {{k}}

\def \d {{\bf d}}

\def \vv{\vskip 0.5 truecm}

\def \dbar {{\bar d}}
\def \I {{\mathcal I}}

\def \RR {{\mathbb R}}
\def \NN {{\mathbb N}}
\def \P {{\F}}
\def \F {{\mathcal F}}

\def \p {{p_0}}
\def \E {{\F}^{max}}

\def \Ubalr {{U_r}}

\def \Wbalr {{\bar U_r}}

\def\bel{\begin{equation}\label}
\def\eeq{\end{equation}}

\def\T{\mathbf C}

 \textwidth 15cm
 \def \M {{M_0}}

\def \R {{\mathbb R}}
\def \N {{\mathbb N}}
\def \k {{k}}

\def \F {{f}}

\def \p {{p_0}}
\def \E {{f}^{max}}

\def\bel{\begin{equation}\label}
\def\eeq{\end{equation}}

\def\T{\mathbf C}

\def \R {{I\!\!R}}
\def \RR {{I\!\!R}}

\title[MRF for unbounded dynamics;
  control-polynomial systems]{
Minimum Restraint Functions \\
for   unbounded dynamics:
\\
general and control-polynomial systems
}

\author[A. C. Lai]{Anna Chiara Lai}
\address{A. C. Lai, Dipartimento di Scienze di Base e Applicate per l'Ingegneria,
Sapienza Universit\`a di Roma\\ Via Scarpa 16, Roma 00181, Italy}
\email{{anna.lai@sbai.uniroma1.it}}

\author[M. Motta]{Monica Motta}
\address{M. Motta, Dipartimento di Matematica,
Universit\`a di Padova\\ Via Trieste, 63, Padova  35121, Italy}
 \email{{motta@math.unipd.it}}

\author[F. Rampazzo]{Franco Rampazzo}
\address{F. Rampazzo, Dipartimento di Matematica,
Universit\`a di Padova\\ Via Trieste, 63, Padova  35121, Italy}
\email{{rampazzo@math.unipd.it}}

\thanks{ This research is partially supported by  the Gruppo Nazionale per l' Analisi Matematica, la Probabilit\`a e le loro Applicazioni (GNAMPA) of the Istituto Nazionale di Alta Matematica (INdAM), Italy;  and 
by Padova University grant PRAT 2015 ``Control
of dynamics with reactive constraints''}

\keywords{Optimal control, asymptotic controllability, exit-time problems, unbounded controls, vector polynomials}
\subjclass[2010]{49K15, 93C15, 93D30}

\begin{document}

\begin{abstract}
We consider   an exit-time minimum  problem with a running cost \, $l\geq 0$ and unbounded controls.  The occurrence  of points where $l=0$  can be regarded as a transversality loss. Furthermore, since controls  range over  unbounded sets,  the family of admissible  trajectories  may lack important  compactness   properties. In the first part of the paper we show that the existence of  a {\it  $\p$-Minimum Restraint Function} provides not only global asymptotic controllability (despite non-transversality)  but also  a state-dependent upper bound for the value function (provided $p_0>0$). This extends to  unbounded  dynamics a former  result which heavily relied  on the compactness   of the control set. 

 In the second part of the paper we apply the general result  to  the case when the system  is  {\it polynomial} in the control variable.  Some elementary, algebraic,  properties of the convex hull of   vector-valued polynomials' ranges allow some simplifications of  the  main result, in terms of either  near-affine-control systems  or  reduction to  weak subsystems  for the original dynamics.
\end{abstract}

 \maketitle

\section{Introduction} 
Mainly motivated  by the case when the dynamics  is polynomial in the control,
we deal with  optimal control problems of the form 
\begin{align}
&\dot x=f(x,u), \qquad x(0) = z  \label{Eintro}, \\&(x(t),u(t)) \in (\Omega\backslash{\bf C})\times U , \quad \lim_{t\to {T}_{x}^-}  \d (x(t),\T)=0,\\
&   \displaystyle{\I(x,u):= \int_ 0^{T_{x}} l(x(t),u(t))\, dt,} \qquad  \displaystyle{V(z):=  \inf_{(x,u)} \I(x,u)},\label{minprobintro}
\end{align}
where: i) for given positive integers $n,m$,  the {\it state space}  $\Omega$ is an open subset of $ \RR^n $, the {\it  controls} $u$  range over a (possibly unbounded)  subset of $U\subseteq\R^m$,   and  ${\bf C}\subset\Omega$  is a    closed  {\it target} with compact boundary; 
ii) the {\it current cost}  $l(x,u)$ is $\geq 0$ for all $ (x,u) \in (\Omega\backslash{\bf C})\times U$;
iii) $T_{{x}} \in [0,+\infty]$ is  the infimum of  times needed for the trajectory  $x(\cdot)$  to approach the target $\T$; and
iv) $\mathbf d(x,{\bf C})$ denotes the usual (Euclidean) distance of the point $x$ from the subeset $\bf C$. 

We focus on a particular kind of Lyapunov function, called {\it $p_0$-Minimum Restraint Function } ($\p\geq 0$). This notion  has  been introduced in \cite{MR13}  under the extra-hypothesis that the controls range over a  bounded  set. The existence  of a $p_0$-Minimum Restraint Function, besides implying global asymptotic controllability  to $\T$, was shown to  provide  a continuous  upper estimate for the  value function  $V$. Such an estimate is not trivial, in that the problem (here and in  \cite{MR13} as well)  lacks what in first order PDE's is called   {\it  transversality},  which  would correspond to the assumption  $l(x,u) \neq 0 $ for all  $(x,u)$ (as in the  minimal time problem, where $l=1$)\footnote{But here the exit time can well be infinite.}.
Here, we  extend  the concept  of $p_0$-Minimum Restraint Function to  unbounded   dynamics $f$. Notice that the unboundedness of $f$ (and $l$) cannnot be neglected, for no  {\it  coercivity}  hypotheses --roughly speaking,  the fact that $u\mapsto l(x,u)$ grows suitably faster than $u\mapsto f(x,u) $ --  rule out the need  of larger  and larger velocities in a minimizing sequence.

Precisely,  for a $\p\geq 0$   we call \emph{$\p$-Minimum Restraint Function}  every         continuous       function          $$W:\Omega\setminus\overset{\circ}\T\to[0,+\infty[$$ whose restriction to $\Omega\setminus\T$ (is   locally semiconcave, positive definite and proper \footnote{See  Definition  \ref{defMRF}, where,  as soon as $\Omega\subsetneq \R^n$, one also posits
 $W_0\in \R\cup \{+\infty\}$ such that
 $W (\Omega\setminus\T)< W_0$ \text{and} $\lim_{x\to x_0} W(x)=W_0$,
 for every     $x_0\in\partial\Omega$.}, and)  verifies
  \bel{MRH1intro}
 H_{l,f}(x,\p,D^*W(x))<0 \quad \forall x\in \Omega\setminus\T,
 \eeq
where the Hamiltonian $H_{l,\F}$ is defined by 
 \bel{Hamintro }
 H_{l,f}(x,p_0,p):= \inf_{u\in U}\Big\{ \langle p ,f(x,u) \rangle+p_0\,l(x,u)\Big\}  .
 \eeq

The inequality  \eqref{MRH1intro} has to be interpreted as  $H_{l,\F}(x,\p, p )<0$  $\forall p\in D^*W(x)$---which includes  the case $H_{l,\F}(x,\p, p )=-\infty$ . 
The following hypothesis will be crucial: 
\vskip0.3truecm
 {\bf Hypothesis A}:  {\it  For every compact subset ${\mathcal K}\subset\Omega\backslash\T$
the function
\bel{ipointro}
(\bar l,\bar{\F})(x,u) := \frac{(l,{\F})}{1 +|(l,\F)(x,u)|}  (x,u)\eeq
 is uniformly continuous on ${\mathcal K}\times U$.}

\vskip0.3truecm
Observe  that Hypothesis {\bf A} allows for  a vast class of cost-dynamic pairs $(l,f)(x,u)$\footnote{See  Remark \ref{A'}, for   a bit stronger hypothesis.}, including ($x$-dependent) polynomials in  $u_1,\cdots,u_m$,
$|u_1|,\cdots,|u_m|$, $|u|$, and compositions of polynomials with exponential and Lipschitz continuous functions.
Let us bring forward the  statement of our main result:
\begin{theorem}\label{stimaenergiaintro}
  Assume Hypothesis {\bf A} and let $W$ be   a $\p$-Minimum Restraint Function for the problem $(l,f,\T)$, for some $\p\ge0$. Then
 \begin{itemize}
 \item[{\bf (i)}]   system {\rm\eqref{Eintro}} is   globally asymptotically controllable to $\T$.
 \end{itemize} 
Furthermore, \begin{itemize}\item[{\bf (ii)}] if     $\p>0$,  then
  \bel{}
 {{V}}(z)\le \frac{W(z)}{{{\p}}}\,  \qquad\forall z\in \Omega\setminus \T.
\eeq
 \end{itemize} 
  \end{theorem}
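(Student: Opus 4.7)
The plan is to reduce the unbounded-velocity problem to a bounded one via the time rescaling suggested by Hypothesis~\textbf{A}. Set $\alpha(x,u):=1+|(l,f)(x,u)|$, so that $(\bar l,\bar f)=(l,f)/\alpha$ is bounded in norm by $1$ and uniformly continuous on $\mathcal K\times U$ for every compact $\mathcal K\subset\Omega\setminus\mathcal T$. Dividing by the positive factor $\alpha$ preserves the sign of $\langle p,f(x,u)\rangle+p_0 l(x,u)$, hence $W$ is also a $p_0$-Minimum Restraint Function for the rescaled triple $(\bar l,\bar f,\mathcal T)$, whose velocity set $\bar f(x,U)$ is now uniformly bounded. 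The classical time-change $t(s):=\int_0^s\alpha(\bar x(\sigma),\bar u(\sigma))^{-1}\,d\sigma$ puts rescaled trajectories $(\bar x,\bar u)$ of $\dot{\bar x}=\bar f(\bar x,\bar u)$ into one-to-one correspondence with admissible trajectories $(x,u)$ of the original system and preserves the integral cost, $\int_0^{T_x}l(x,u)\,dt=\int_0^{\bar T}\bar l(\bar x,\bar u)\,ds$.

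On the rescaled system one follows the scheme of~\cite{MR13}. For each $x\in\Omega\setminus\mathcal T$, use compactness of the limiting superdifferential $D^*W(x)$ (from local semiconcavity) and the strict inequality $H_{\bar l,\bar f}(x,p_0,\cdot)<0$ on $D^*W(x)$ to extract a control $u_x\in U$ and a constant $\delta_x>0$ with $\langle p,\bar f(x,u_x)\rangle+p_0\bar l(x,u_x)\le-\delta_x$ for every $p\in D^*W(x)$. Uniform continuity of $(\bar l,\bar f)$ together with upper semicontinuity of $y\mapsto\partial^+W(y)=\mathrm{co}\,D^*W(y)$ propagates this bound, with the \emph{same} $u_x$, to a small neighborhood of $x$. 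Covering a sublevel set $\{W\le c\}\setminus\mathcal T_\varepsilon$ (compact in $\Omega$ by properness of $W$ and the boundary condition $W\to W_0$ at $\partial\Omega$) by finitely many such neighborhoods and patching yields a piecewise-constant rescaled control $\bar u$ whose trajectory $\bar x$ satisfies, via the semiconcave chain rule,
\[
\frac{d}{ds}W(\bar x(s))\ \le\ -\,p_0\,\bar l(\bar x(s),\bar u(s))\,-\,\delta(s)
\]
with $\delta(\cdot)>0$. Integration and properness of $W$ then force $W(\bar x(s))\to 0$, so $\mathbf d(\bar x(s),\mathcal T)\to 0$. Pushing the construction back through $t=t(s)$ produces an admissible original trajectory with $\mathbf d(x(t),\mathcal T)\to 0$ as $t\to T_x^-$ and $\int_0^{T_x}l\,dt\le W(z)/p_0$, which is~(ii); the attractivity part of (i) is built in, while the stability part follows from the standard sublevel-set argument applied to the proper, positive definite $W$.

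The principal obstacle is the selection step: the MRF inequality is formulated as a pointwise $\inf_{u\in U}<0$ for each $p\in D^*W(x)$ separately, whereas the trajectory construction needs one $u_x$ making $\langle p,\bar f(x,u_x)\rangle+p_0\bar l(x,u_x)$ uniformly negative across \emph{all} $p\in D^*W(x)$ and across superdifferentials at nearby points. Exchanging the two quantifiers requires a min--max/compactness argument exploiting the convex structure of $\partial^+W=\mathrm{co}\,D^*W$; the unbounded control set is tamed precisely by the preliminary time rescaling, since $\bar f(\cdot,U)$ is then a bounded family on which Hypothesis~\textbf{A} delivers the uniform continuity needed to propagate strict negativity to a neighborhood. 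A further technical point is that the rescaled trajectory must not escape $\Omega$ before approaching $\mathcal T$; this is ruled out by the $W_0$-condition, which makes the sublevel sets of $W$ closed subsets of $\Omega$ bounded away from $\partial\Omega$.
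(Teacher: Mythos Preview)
Your overall architecture---rescale by $\alpha=1+|(l,f)|$, observe that $W$ remains a $p_0$-MRF for the bounded pair $(\bar l,\bar f)$, prove GAC and the value estimate for the rescaled problem, and then undo the time change---is exactly the route the paper takes (Claim~\ref{cres} and Theorem~\ref{nuovo}).

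Where you diverge is in the treatment of the bounded problem, and here there is a real gap. You claim that the ``principal obstacle'' is to produce, at each $x$, a \emph{single} control $u_x$ with
\[
\langle p,\bar f(x,u_x)\rangle+p_0\,\bar l(x,u_x)\le -\delta_x\qquad\text{for every }p\in D^*W(x),
\]
and you propose to obtain this by a min--max argument on $\mathrm{co}\,D^*W(x)$. Two problems: first, such a min--max exchange is not available in this setting (the control set $U$ is neither convex nor compact, and even after rescaling only $\bar f$ is bounded, not $U$); second, and more importantly, \emph{this uniformity in $p$ is not needed at all}. The semiconcavity of $W$ gives, for any fixed $p\in D^*W(x)$, the one-sided inequality
\[
W(\hat x)-W(x)\le \langle p,\hat x-x\rangle+\rho\,|\hat x-x|^2
\]
(Lemma~\ref{Lscv}). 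In the sample-and-hold construction the paper uses (Proposition~\ref{claim2bis}), one simply \emph{fixes} an arbitrary selection $p(x^j)\in D^*W(x^j)$ at each node $x^j$ and then chooses $u^j$ so that $\langle p(x^j),\bar f(x^j,u^j)\rangle+p_0\bar l(x^j,u^j)<-\gamma(W(x^j))$; the decrease estimate for $W$ along the piecewise-constant trajectory follows by inserting that single $p(x^j)$ into the semiconcavity inequality. No quantifier swap, and no ``semiconcave chain rule'' of the form $\frac{d}{ds}W(\bar x(s))\le\cdots$, is required.

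So your sketch is not wrong in spirit, but the step you flag as the heart of the matter is a self-imposed difficulty whose proposed resolution is unjustified; replacing it with the one-$p$-per-node argument above (as in Definition~\ref{FDBK} and Proposition~\ref{claim2bis}) closes the gap and brings your proof in line with the paper's.
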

The proof of the theorem relies on a state-based time rescaling of the problem,  which in turn is made possible by Hypothesis  {\bf A}.  The controls of the  rescaled problem (see Section \ref{generalsec}) still range in the (possibly unbounded) set $U$. Yet, some compactness properties 
 of the  rescaled dynamics are of  crucial importance  in the construction of  trajectories reaching the target at least asymptotically.

An application to the gyroscope (see Subsection \ref{gyroex})  concludes  Section \ref{generalsec}: an explicit  $\p$-Minimum Restraint Function is provided for a minimum  problem where  the {control is identified with the  pair made by the  precession and spin velocities,  while the state  corresponds to pair made by the  nutation angle and its time-derivative. 
\vskip0.5truecm
The remaining part of the paper is devoted to problems whose dynamics can be parameterized  by a $u$-polynomial:
\bel{polintro}
\dot x = \displaystyle f (x,u):= f_0(x)+\sum_{i=1}^d\left(\sum_{\alpha\in\N^m, \, \alpha_1+\dots+\alpha_m=i} u_1^{\alpha_1}\cdots u_m^{\alpha_m} f_{\alpha_1,\dots,\alpha_m}(x)\right)\,.
\eeq

Among   applications for which the polynomial dependence is relevant let us mention Lagrangian mechanical systems, possibly with friction forces, in which   inputs  are identified with the derivatives of  some Lagrangian  coordinates. In this case $d=2$\footnote{This is clearly a consequence of the fact that the  kinetic energy is a  quadratic form of the velocity (see, besides Subsection \ref{gyroex}, \cite{aldobressan} and \cite{BR10}).}.  We point out also that, in connection with the investigation of uniqueness and regularity of solutions for Hamilton-Jacobi equations, dynamics and current costs with  unbounded controls and { polynomial growth}  have been already addressed in \cite{mot04},  \cite{MS14}, by embedding the problem in a space-time problem  through  techniques of graph's reparameterization -- see e.g.
\cite{BR88,BR10,Dyk90,Gur72,Mil96,Ris65,RS00,VP88}.  With similar arguments (see also \cite{MR03}) necessary conditions for the existence of (possibly impulsive) minima of input-polynomial optimal control problems  have been studied in \cite{chez}.  Furthermore, the interplay between convexity and polynomial dependence of both the dynamics and the running cost \, has been investigated also  in \cite{PT09}, in connection with  problems  of existence of optimal solutions.

A careful  investigation  of elementary,  algebraic properties of the convex hull $co~f(x,\R^m)$  proves essential for the application of  Theorem
\ref{stimaenergiaintro} to the polynomial case \eqref{polintro}. 
For instance,  we consider {\it near-control-affine} control systems,  a  class of control-polynomial systems where the convex hull  of the dynamics can be parameterized as a  control-affine system with controls in a neighborhood of the origin\footnote{Once the convex hull of the dynamics is so nicely  parameterized,  relaxation   arguments allow applying several well-established results for control-affine systems.}. For instance,   this is  clearly false for the system
$
\dot x = f_0(x) +uf_1(x)+u^2 f_2(x),
$
$u\in\R,$
-- because the origin $(0,0)$ does not belong to the  the  convex hull's interior of the  curve  $(u,u^2)$. Instead, in view of Theorem \ref{cnear-control-affine},  the  convex hull of the image of 
\begin{multline*}
f_0(x) +u_1 f_{1,0,0,0,0,0,0}(x) + u_1u^5_3f_{1,0,5,0,0,0,0}(x) +u_2^3u_6^3 f_{0,3,0,0,0, 3,0}(x) \\+u_1u_3^5u_7^9 f_{1,0,5,0,0,0,9}(x),  
 \end{multline*}

$(u_1,\dots,u_7) \in\R^7$ does  coincide with the range of
$$
 f_0(x) +w_1f_{1,0,0,0,0,0,0}(x)  + w_2 f_{1,0,5,0,0,0,0}(x)  +w_3  f_{0,3,0,0,0, 3,0}(x)  +w_4 f_{1,0,5,0,0,0,9}(x),   
$$
$(w_1,w_2,w_3,w_4)\in\R^4.$

\vskip0.3truecm
When the system is not near-control-affine (and $U=\RR^m$), one  can try to exploit {\it weak subsystems}: the latter are    selections of the set-valued function $x\mapsto co~f(x,\R^m)$. In particular,  we  consider the  \emph{maximal degree subsystem}  and, for any $\lambda$ in the $m$-dimensional  simplex,  the  \emph{$\lambda$-diagonal subsystems}  (see Definition \ref{diagdef}
and Subsection \ref{secmaximal}, respectively). The  idea of utilizing {\it subsystems} might look counterproductive  with respect to the task of finding  a $\p$-Minimum Restraint Function: indeed, for such a purpose, having a sufficiently large amount  of  available directions plays crucial.   However, from a practical perspective, a diminished complexity in the dynamics might  ease the  guess of a $\p$-Minimum Restraint Function, which would automatically  be  a  $\p$-Minimum Restraint Function for the original polynomial problem.
To give  the flavour  of this viewpoint,  let us  anticipate a result (see Theorem \ref{maximalth} for details) concerning 
 maximal degree subsystems. 

 \begin{theorem}\label{thselectionintro} Let the growth assumption specified in   {Hypothesis} {\bf A$_{max}$}  below (Section \ref{secmaximal}) be verified.
If $W$ is a  $\p$-Minimum Restraint Function   for the {\rm maximal degree subsystem}
$$f^{max}(x,u):=f_0(x)+\sum_{\alpha\in\N^m, \, \alpha_1+\dots+\alpha_m=d} u_1^{\alpha_1}\cdots u_m^{\alpha_m} f_{\alpha_1\dots\alpha_m}(x),$$
 then $W$  is also a  $\p$-Minimum Restraint Function for the original control polynomial system
$$\displaystyle f (x,u):= f_0(x)+\sum_{i=1}^d\left(\sum_{\alpha\in\N^m, \, \alpha_1+\dots+\alpha_m=i} u_1^{\alpha_1}\cdots u_m^{\alpha_m} f_{\alpha_1,\dots,\alpha_m}(x)\right)\,.$$
 \end{theorem}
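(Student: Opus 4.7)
The defining conditions of a $p_0$-Minimum Restraint Function split into those that involve only $W$ and $\Omega$ (continuity, local semiconcavity, positive definiteness, properness and the prescribed boundary behavior) and the Hamiltonian inequality, which is the only one tied to the dynamics. Since the first group transfers unchanged from the hypothesis that $W$ is a $p_0$-MRF for $f^{max}$, the proof reduces to verifying
\[
H_{l,f}(x,p_0,p) \;<\; 0, \qquad \forall\, x\in\Omega\setminus\T,\ \forall\, p\in D^*W(x),
\]
granted the corresponding strict inequality for $f^{max}$. My plan is to establish the sharper pointwise comparison $H_{l,f}(x,p_0,p)\leq H_{l,f^{max}}(x,p_0,p)$, which immediately yields the desired strict inequality.

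The key construction is a two-point convex combination that rescales away the sub-leading pieces of $f$. Fix $u^{*}\in\R^m$ and, for $\tau>0$, consider the weights $\alpha_\tau:=1-\tau^{-d}$ and $1-\alpha_\tau=\tau^{-d}$ at the controls $0$ and $\tau u^{*}$. Writing $f(x,u)=f_0(x)+\sum_{i=1}^d g_i(x,u)$, with $g_i$ the degree-$i$ homogeneous part in $u$, one obtains
\[
\alpha_\tau f(x,0)+(1-\alpha_\tau)f(x,\tau u^{*})
\;=\;f_0(x)+\sum_{i=1}^{d}\tau^{\,i-d}g_i(x,u^{*})
\;\xrightarrow[\tau\to\infty]{}\;f_0(x)+g_d(x,u^{*})=f^{max}(x,u^{*}).
\]
Since the infimum defining the Hamiltonian is dominated by any single evaluation point, one has
\[
H_{l,f}(x,p_0,p)\;\leq\;\alpha_\tau\bigl[\langle p,f(x,0)\rangle+p_0\, l(x,0)\bigr]+(1-\alpha_\tau)\bigl[\langle p,f(x,\tau u^{*})\rangle+p_0\, l(x,\tau u^{*})\bigr].
\]
Under the polynomial growth condition of Hypothesis $\mathbf{A}_{max}$, the right-hand side converges as $\tau\to\infty$ to $\langle p,f^{max}(x,u^{*})\rangle+p_0\, l(x,u^{*})$; then taking the infimum over $u^{*}\in U$ produces the required pointwise comparison.

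The central technical obstacle is the cost term. Whereas the convex-combination trick yields $f^{max}(x,u^{*})$ exactly in the $f$-slot, the corresponding convex average of $l(x,\cdot)$ at $0$ and $\tau u^{*}$ tends to $l_0(x)+l^{max}(x,u^{*})$, the constant plus top-degree part, which in general differs from the full $l(x,u^{*})$ by intermediate-degree monomials $l_1(x,u^{*})+\cdots+l_{d-1}(x,u^{*})$. It is precisely here that Hypothesis $\mathbf{A}_{max}$ enters essentially: the growth constraint it imposes on the pair $(l,f)$ is calibrated to reconcile this discrepancy, either by forcing the intermediate contributions to have the right sign, or by providing enough slack to absorb them through an additional rescaling of $u^{*}$; in this sense it plays exactly the role that the convex-hull algebra for polynomial images plays in the rest of Section \ref{secmaximal}. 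Once this compatibility is secured, the argument closes and gives $H_{l,f}(x,p_0,p)\leq H_{l,f^{max}}(x,p_0,p)$. The degenerate case $H_{l,f^{max}}(x,p_0,p)=-\infty$ is handled separately by observing that the same unbounded sequence of $u$'s driving $H_{l,f^{max}}$ to $-\infty$ also forces $H_{l,f}$ to $-\infty$, since the degree-$d$ homogeneous parts of $f$ and $f^{max}$ coincide.
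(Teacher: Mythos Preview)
Your strategy---scaling the control and using two evaluation points to compare $H_{l,f}$ with $H_{l,f^{max}}$---is essentially the same mechanism the paper uses: the paper argues by contradiction, evaluates the bracket at $u=0$ and at $u=k\tilde u$ for $k$ large, and lets the degree-$d$ part dominate, which is your two-point convex combination in another packaging. But your handling of the cost term is a genuine gap, not a technicality to be waved through.

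You describe $l$ as if it had polynomial structure with ``intermediate-degree monomials $l_1+\cdots+l_{d-1}$'', yet Hypothesis~$\mathbf{A}_{max}$ says nothing of the sort: it only posits $l(x,u)=M_0(x)+M_1(x,u)$ with $M_0,M_1\ge 0$, $M_1(x,0)=0$, and the sub-homogeneity $M_1(x,ku)\le k^{d}M_1(x,u)$ for $k\ge 1$. This single inequality is exactly what closes your argument and you never invoke it. For $\tau\ge 1$,
\[
\alpha_\tau\, l(x,0)+(1-\alpha_\tau)\,l(x,\tau u^*)
=M_0(x)+\tau^{-d}M_1(x,\tau u^*)
\le M_0(x)+M_1(x,u^*)=l(x,u^*),
\]
so, since $p_0\ge 0$, the cost contribution is already bounded by $p_0\,l(x,u^*)$ for every $\tau\ge 1$, not merely in some limit. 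Combining this with your convergence $\alpha_\tau f(x,0)+(1-\alpha_\tau)f(x,\tau u^*)\to f^{max}(x,u^*)$ yields
\[
H_{l,f}(x,p_0,p)\le \langle p,\,f^{max}(x,u^*)\rangle+p_0\,l(x,u^*)\qquad\text{for every }u^*\in\R^m,
\]
hence $H_{l,f}\le H_{l,f^{max}}$, and the separate discussion of the $-\infty$ case becomes superfluous.
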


 \vskip0.3truecm
The paper is organized as follows.  In the remaining part of the present  section  we  provide  some preliminary definitions and notation. In Section \ref{generalsec} we prove  Theorem \ref{stimaenergiaintro} and exhibit  a $\p$-Minimum Restraint Function for  the gyroscope (see Subsection \ref{gyroex}).    Section \ref{rescsec}  is entirely devoted to  the proof of Theorem \ref{nuovo} which deals with a suitably rescaled problem.  In Section \ref{balsect} we focus on the case when the system  is   polynomial  in the control variable.   An Appendix with a technical proof concludes the paper.

 \vskip0.3truecm
\subsection{Preliminary concepts and notation}\label{subpb}   
$\,$
 \vskip0.3truecm
Let us   gather some notational conventions as well as  some basic concepts and results  which will be used throughout the paper. 
\vv
We are given an open set $\Omega\subset\RR^n$ and a {\it target }$\T\subset \Omega$, which we assume to have compact boundary $\partial\T$.   
For brevity, let us use the notation $\d(x)$ in place of $\d(x,\T)$.

\begin{definition}\label{xadm}
 We say that a path $x:[0,T_x[\to\Omega$ is  {\rm admissible} if
 \begin{itemize}
\item[i)] $0< T_x\leq+\infty$,
\item[ii)] $x\in AC_{loc}([0,T_x[,\Omega)$, 
\item[iii)] $x([0,T_x[)\subset \Omega\backslash \T$,
\item[iv)]  $\displaystyle \lim_{t\to  T^-_{ x}} {\bf d}(x(t))=0$. 
\end{itemize}
We call $T_x$ the {\rm exit time of $x$ from $\Omega\setminus \T$}. 
\end{definition}
Notice that the limit of $x(\cdot)$ for $t\to T_x^-$ need not exist, even when $T_x<+\infty$. Of course, if the limit exists, then it belongs to the target $\T$.

\begin{definition}\label{adm}
Let $g:\Omega\times U\to\R^n$ be a continuous function. For every  $z\in\Omega\setminus\T$, we will say that $(x,u)$ is an  {\rm   admissible trajectory-control pair} from $z$  for the control system 
 \bel{Egen}
 \dot x=g(x,u), \quad x(0)=z
 \eeq
  if 
\begin{itemize}
\item[i)] $x:[0,T_x[\to \Omega\setminus\T$ is an  admissible path,
\item[ii)] $u(\cdot)\in L^\infty_{loc}([0,T_x[,U)$,
\item[iii)] $x(\cdot)$ is a Charath\'eodory solution\footnote{Notice that such a solution  might be not unique.}  of {\rm\eqref{Egen}} corresponding to the input $u$.
\end{itemize}  
We shall use ${\mathcal{ A}}_g({ z})$ to denote the family of 
admissible trajectory-control pairs from $z$ for the control system {\rm\eqref{Egen}}.
\end{definition}

\vskip0,3truecm
  As customary, we shall use ${\mathcal KL}$ to denote the set of all continuous functions  $$\beta:[0,+\infty[\times[0,+\infty[\to[0,+\infty[$$ such that: (1)\, $\beta(0,t)=0$ and $\beta(\cdot,t)$ is strictly increasing and unbounded for each $t\ge0$; (2)\, $\beta(r,\cdot)$ is decreasing  for each $r\ge0$; (3)\, $\beta(r,t)\to0$ as $t\to+\infty$ for each $r\ge0$. 

 \begin{definition}\label{(GAC)}
The system   (\ref{Egen}) is   {\em globally asymptotically controllable to $\T$}  -- shortly, (\ref{Egen}) is {\em GAC to $\T$} --  provided  there is a  function $\beta\in{\mathcal KL}$  such that,  for each initial state $z\in\Omega\setminus \T$,    there exists an admissible  trajectory-control pair $(x,u)\in {\mathcal{ A}}_g({ z})$   that verifies
\bel{bbound}
{\bf d}(x(t))\le \beta({\bf d}(z),t) \qquad \forall t\in[0,+\infty[.\,\,
\footnote{ By convention, we fix an arbitrary $\bar z\in\partial\T$ and  formally establish that, if $T_{x}<+\infty$, the trajectory $x(\cdot)$ is prolonged to $[0,+\infty[$,
by setting $x(t)=\bar z$ for all $t\geq T_{x}$.}
\eeq
\end{definition}

\vv
\begin{definition}[Positive definite and proper functions] Let ${\bf E}$, $\Theta\subset \RR^n$ be,  respectively,   a closed and an open set with ${\bf E}\subset \Theta$  and let  $F:\Theta\setminus\overset{\circ}{\bf E}\to\R$  be a continuous function. Then $F$ is {\em positive definite on $\Theta\setminus {\bf E}$} if  $F(x)>0$ for all $ x\in\Theta\setminus {\bf E}$ and $F(x)=0$ for all $ x\in\partial{\bf E}$. \\
The function $F$ is called {\em proper  on $\Theta\setminus {\bf E}$} if the pre-image $F^{-1}(K)$ of any compact set $K\subset[0,+\infty)$ is compact.
\end{definition}

 \begin{definition}[Semiconcave functions]\label{sconc} Let $\Theta\subseteq\R^n$.  A continuous function  $F:\Theta\to\R$  is said to be {\rm  semiconcave  on $\Theta$} if  
 $$
 F(z_1)+F(z_2)-2F\left(\frac{z_1+z_2}{2}\right)\le \rho|z_1-z_2|^2, 
 $$
 for all $z_1$, $z_2\in \Theta$ such that $[z_1,z_2]\subseteq\Theta$.  $F$ is said to be {\rm  locally semiconcave  on $\Theta$} if it semiconcave on every compact subset of $\Theta$. 
 \end{definition}
 
We remind that locally semiconcave functions are locally Lipschitz continuous. 

 \begin{definition}[{\rm Limiting gradient}]\label{D*} Let $\Theta\subset\R^n$ be an open set and let   $F:\Theta\to\R$  be a locally Lipschitz function.  For every $x\in \Theta$ we set
$$
D^*{F}(x) := \Big\{ w\in\R^n\mid \ \  w=\lim_{k}\nabla {F}(x_k), \ \  x_k\in DIFF(F)\setminus\{x\}, \ \ \lim_k x_k=x\Big\}
$$
where   $\nabla$ denotes the classical gradient operator and $DIFF(F)$ is the set of differentiability points of $F$.  $D^*{F}(x)$ is called
the  \emph{set of limiting gradients} of $F$ at $x$.
\end{definition}

\begin{remark}
{\rm  
The set-valued map $x\mapsto D^*F(x)$ is upper semicontinuous on $\Theta$,  with non-empty, compact values. Notice that   $D^*{F}(x)$ is not convex.   
  When $F$ is a locally semiconcave function,  $D^*{F}$ coincides  with the limiting subdifferential $\partial_LF$, namely, 
  $$
  D^*F(x)=\partial_LF(x) := \{\lim \,  p_i: \  p_i\in \partial_PF(x_i), \ \lim\,  x_i=x\} \quad \forall x\in\Theta,
  $$   
where  $\partial_PF$ denotes the proximal subdifferential,  largely used in the literature on Lyapunov functions. }
\end{remark}

Basic properties of the semiconcave functions  imply  the following fact:
\begin{lemma}\label{Lscv} Let $\Theta\subset\R^n$ be an open set and let  $F:\Theta \to\R$   be a   locally  semiconcave function. Then for any compact set $\mathcal{K}\subset \Theta$ there exist some positive constants  $L$ and $\rho$ such that, for any $x\in \mathcal{K}$ 
 \footnote{The inequality (\ref{scvintro}) is usually formulated with the proximal superdifferential  $\partial^P F$. However, this does not make a difference here since $\partial^P F=\partial_C F=co D^* F$ as soon as $F$ is locally semiconcave. Hence (\ref{scvintro}) is true in particular for $D^*F$. }, 
\bel{scvintro}
\begin{array}{l}
F(\hat x)-F(x)\le  \langle p,\hat x-x \rangle+\rho|\hat x-x|^2, \\ \, \\
|p|\le L    \quad \forall p\in D^*F(x),
\end{array}
\eeq
for any point  $\hat x\in\mathcal{K}$ such that $[x,\hat x]\subset  \mathcal{K}$.   
\end{lemma}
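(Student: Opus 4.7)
The plan is to reduce the inequality to a one-dimensional concavity argument at differentiability points and then pass to limits to cover all of $D^*F(x)$. First I would fix a compact set $\mathcal{K}' \subset \Theta$ whose interior contains $\mathcal{K}$ together with a small tubular neighborhood of every admissible segment $[x,\hat x] \subset \mathcal{K}$. By local semiconcavity on $\Theta$, $F$ satisfies the defining inequality of Definition~\ref{sconc} on $\mathcal{K}'$ with a uniform constant $C>0$; since locally semiconcave functions are locally Lipschitz, $F$ admits also a Lipschitz constant $L$ on $\mathcal{K}'$. At any $y\in\mathcal{K}'\cap DIFF(F)$ one has $|\nabla F(y)|\le L$, and taking limits along sequences $x_k\to x$, $x_k\in DIFF(F)\setminus\{x\}$, immediately gives $|p|\le L$ for every $p\in D^*F(x)$, $x\in\mathcal{K}$.

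For the main inequality, I would first assume $F$ is differentiable at $x$. Given $\hat x\in\mathcal{K}$ with $[x,\hat x]\subset\mathcal{K}$, set $g(t):=F(x+t(\hat x-x))$ on $[0,1]$. Applying the semiconcavity inequality to pairs of points on $[x,\hat x]$ shows that $g$ is one-dimensionally semiconcave with constant of order $C|\hat x-x|^2$. By the classical ``semiconcave equals concave plus quadratic'' decomposition, $t\mapsto g(t)-\rho t^2$ is concave on $[0,1]$ for a suitable $\rho$ controlled by $C$ and the diameter of $\mathcal{K}'$. Since $g$ is differentiable at $t=0$ with $g'(0)=\langle\nabla F(x),\hat x-x\rangle$, the tangent-line inequality for concave functions at differentiability points gives $g(1)-g(0)\le g'(0)+\rho$, which is precisely \eqref{scvintro} when $p=\nabla F(x)$.

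For a general $p\in D^*F(x)$, I would pick $x_k\in DIFF(F)\setminus\{x\}$ with $x_k\to x$ and $\nabla F(x_k)\to p$. For $k$ large enough the segments $[x_k,\hat x]$ lie in $\mathcal{K}'$, since $[x,\hat x]$ is at positive distance from $\partial\mathcal{K}'$, so the previous step applies at each $x_k$ with the constants attached to $\mathcal{K}'$. Continuity of $F$ together with the convergences $\nabla F(x_k)\to p$ and $x_k\to x$ let me pass to the limit in the resulting inequality and recover \eqref{scvintro} for $p$. The only delicate point, and the \emph{main obstacle}, is to ensure that the constants $L$ and $\rho$ are uniform over all admissible pairs $(x,\hat x)\in\mathcal{K}\times\mathcal{K}$ with $[x,\hat x]\subset\mathcal{K}$; this is handled once and for all by the initial choice of $\mathcal{K}'$.
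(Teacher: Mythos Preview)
The paper does not actually prove this lemma; it is stated as a direct consequence of ``basic properties of semiconcave functions'', with only a footnote explaining why the usual formulation via the proximal superdifferential carries over to $D^*F$ (since $\partial^P F=\partial_C F=co\,D^*F$ for locally semiconcave $F$). Your argument is correct and supplies the details the paper omits.

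The only point worth tightening is the bookkeeping of the constant in the one-dimensional step. The value of $\rho$ making $t\mapsto g(t)-\rho t^2$ concave on $[0,1]$ is $2C\,|\hat x-x|^2$, not merely ``a constant controlled by $C$ and the diameter of $\mathcal{K}'$''; with this choice the tangent-line inequality reads
\[
F(\hat x)-F(x)\le \langle \nabla F(x),\hat x-x\rangle + 2C\,|\hat x-x|^2,
\]
which is \eqref{scvintro} with the uniform constant $\rho=2C$. If you instead bound $\rho$ by a number depending on the diameter, you only get $F(\hat x)-F(x)\le \langle p,\hat x-x\rangle + \text{const}$, which is weaker than \eqref{scvintro}. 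Once this is clarified, your limit argument from differentiability points to general $p\in D^*F(x)$ is correct, and the enlargement to $\mathcal{K}'$ handles the uniformity exactly as you describe.
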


\section{$\p$-Minimum restraint functions}\label{generalsec}
\subsection{The main result}
$\,$ 
\vskip0.3truecm
Let us begin with a precise formulation of the  minimum problem. For every initial condition $z\in\Omega\setminus\T$, we consider the control system 
 \bel{E}
 \dot x=f(x,u), \quad x(0)=z,
 \eeq 
 and,  for any admissible trajectory-control pair $(x,u)\in{\mathcal A}_f(z)$ (see Definition \ref{adm}),  let us  introduce the payoff
\begin{equation}\label{1.2}
\I(x,u):= \int_ 0^{T_x} l(x(t),u(t))\, dt \qquad (T\in]0,+\infty]).
\end{equation} 
The corresponding {\it  value function} is given by
\begin{equation}\label{minprob}
V(z)\:= 
 \inf_{(x,u)\in{\mathcal{ A}}_f({z})}\I( x, u ) \quad(\le+\infty).
\end{equation}
\vskip0.3truecm

Recall our principal  hypothesis:
\vskip0.3truecm

 {\bf Hypothesis A}:  {\it  For every compact subset ${\mathcal K}\subset\Omega\backslash\T$
the function
\bel{risc}
(\bar l,\bar{\F})(x,u) := \frac{(l,{\F})}{1 +|(l,\F)(x,u)|}  (x,u)
\eeq
 is uniformly continuous on ${\mathcal K}\times U$.}
\vskip0.3truecm

\begin{remark}\label{A'} {\rm As observed in the Introduction, this hypothesis  allows for  a wide set of {\it unbounded} dynamics and running costs. 
Furthermore,  it is easy to check that the following  condition  {\it  is sufficient for  Hypothesis {\bf A}} to hold true:

$\,$

\begin{itemize}
\item[]{\it The map ~$(l,\F)$ is continuous with respect to the state variable $x$ and locally Lipschitz with respect to the control variable $u$, and
$$
\left|\frac{D_u(l,\F)}{(1+|(l,\F)|)^2}\right|(x,u) \leq \eta(x) \qquad \text{ for a.e.  } (x,u) \in
(\Omega\backslash\T)\times U,
$$ for  some  continuous function
$\eta:~\Omega\backslash\T\to~[0,+\infty[ $.}
\end{itemize}}

\end{remark}

\vv  
  
 \vskip0.3truecm
Let us extend the definition  of  $\p$-Minimum Restraint Function (\cite{MR13}) to the case of unbounded control sets.

 \begin{definition}\label{defMRF}
   Let $W:\Omega\setminus\overset{\circ}{\T}\to[0,+\infty[$ be a continuous function, and let us assume that $W$ is  locally semiconcave, positive definite, and  proper on $\Omega\setminus\T$.
We say that $W$  is a \emph{$p_0$-Minimum Restraint Function --in short, $\p$-MRF--  for  $( l, \F,\T)$    in $\Omega$
 for some $\p\ge0$}  if
    \bel{MRH1} H_{l,\F}(x,\p, D^*W(x) )<0 \quad \forall x\in {{\Omega}\setminus\T}  \quad\footnote{This  means that $H_{l,\F}(x,\p, p )<0$ for every $p\in D^*W(x)$.}
    \eeq
 and, moreover, there exists  $W_0\in [0,+\infty]$, such that
 $$W(\Omega\setminus {\T})< W_0\quad \text{and} \quad \lim_{x\to x_0,\ x\in \Omega} W(x)=W_0$$
 for every $x_0\in\partial\Omega$. 
\end{definition}
 
 \vskip0.3truecm
We can now state our main result:
  \vskip0.2truecm
\noindent{\bf Theorem 1.1.}\label{MRFth} {\it Assume  Hypothesis {\bf A}  and  let $W$ be   a $\p$-Minimum Restraint Function for the problem $(l,f,\T)$,  for some $\p\ge0$. Then:
 \begin{itemize}
 \item[{\bf (i)}]   system {\rm \eqref{E}} is   globally asymptotically controllable to $\T$;
  \item[{\bf (ii)}] if     $\p>0$,  then
  \bel{Wprop}
 {{V}}(z)\le \frac{W(z)}{{{\p}}}\,  \qquad\forall z\in \Omega\setminus \T.
\eeq 
\end{itemize}}
 \vv
 
 \begin{proof}
We begin with a state-based rescaling procedure. Precisely, we consider the optimal control  problem 
\bel{peoblemrep}
\begin{array}{l}
\displaystyle y'(s) =
\bar \F(y,v) \quad y(0) = z;\\\,\\
\displaystyle \I_{\bar l,\bar\F}(y,v) :=  \int_0^{S_y}\bar l(y(s),v(s)) ds, \qquad \bar V(z) :=\displaystyle\inf_{(y,v)\in {\mathcal{ A}}_{\bar f}({z})} \I_{\bar l,\bar\F}(y,v),  
\end{array}\eeq
where   $\bar l$, $\bar{\F}$ are  defined in \eqref{risc}, 
the apex  denotes differentiation with respect to the parameter $s$,  and $S_y\le+\infty$  is the exit time  of the admissible trajectory $y(\cdot)$  (in the time parameter $s$). 

The connection between the original optimal control problem and the rescaled one is established by the following result.

 \vskip0.2truecm
 \begin{Claim}\label{cres}  The path  $(y,v)$ is an   admissible trajectory-control pair for {\rm \eqref{peoblemrep}} if and only if, setting 
\begin{align*}
&t(s):=\int_0^s (1+|(l,\F)(y(\eta),v(\eta)|)^{-1} d\eta \qquad\forall s\in [0,S_{y }[\\
&x(t):= y \circ s(t)\qquad u(t):= v\circ s(t)\qquad\ \forall t\in[0,T_x[, \quad T_x:=t(S_{y }),
 \end{align*}
the path  $(x,u)$ is an admissible  trajectory-control pair for {\rm \eqref{E}}--{\rm \eqref{minprob}}. Furthermore,  
$$
\displaystyle\int_0^{S_{y }}  \bar l(y(s),v(s)) ds = \displaystyle\int_0^{T_x}   l(x(t),u(t)) dt.
$$
In particular, one has  
$$V(z)=\bar V(z)$$ 
for all $z\in \Omega\setminus\T$.
\end{Claim}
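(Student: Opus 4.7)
The plan is to establish a bijection between admissible pairs of the two problems via the proposed state-dependent time change and then transfer costs through the change of variables. First I fix $(y,v)\in {\mathcal A}_{\bar \F}(z)$ and set $\phi(y,v):=1/(1+|(l,\F)(y,v)|)\in(0,1]$. Since $v\in L^\infty_{\mathrm{loc}}([0,S_y[,U)$ and $y\in AC_{\mathrm{loc}}$, on any compact subinterval $[0,S]\subset[0,S_y[$ the trajectory $y$ lies in a compact subset of $\Omega\setminus\T$ and $\|v\|_{L^\infty([0,S])}$ is finite, so by continuity of $(l,\F)$ the factor $\phi(y,v)$ is bounded above by $1$ and away from $0$ on $[0,S]$. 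Hence $t(s)=\int_0^s\phi(y(\eta),v(\eta))\,d\eta$ is a strictly increasing, locally bi-Lipschitz AC bijection of $[0,S_y[$ onto $[0,T_x[$ with $T_x:=\lim_{s\to S_y^-}t(s)\in(0,+\infty]$, and its inverse $s(t)$ is locally AC with $s'(t)=1+|(l,\F)(x(t),u(t))|$ a.e. The chain rule applied to $x(t)=y(s(t))$ then yields
\[
\dot x(t)=y'(s(t))\,s'(t)=\bar \F(y,v)\,(1+|(l,\F)|)=\F(x(t),u(t))\quad\text{a.e. }t\in[0,T_x[,
\]
so $x$ is a Carath\'eodory solution of \eqref{E}. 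By construction $x([0,T_x[)=y([0,S_y[)\subset\Omega\setminus\T$, and $\d(x(t))=\d(y(s(t)))\to 0$ as $t\to T_x^-$ since $s(t)\to S_y^-$ and $y$ is admissible; measurability and local essential boundedness of $u=v\circ s$ follow from those of $v$ together with the local Lipschitzianity of $s$.

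For the converse, starting from $(x,u)\in{\mathcal A}_\F(z)$, I set $\sigma(t):=\int_0^t(1+|(l,\F)(x(\tau),u(\tau))|)\,d\tau$; this is strictly increasing and AC with $\sigma'(t)\ge 1$, so its inverse $\tau(s)$ is $1$-Lipschitz, and defining $y(s):=x(\tau(s))$, $v(s):=u(\tau(s))$, the same chain-rule computation yields $y'(s)=\bar \F(y,v)$ a.e., with $S_y:=\lim_{t\to T_x^-}\sigma(t)$ and $\d(y(s))\to 0$ as $s\to S_y^-$. The two constructions are mutually inverse, so the correspondence $(y,v)\leftrightarrow(x,u)$ is a bijection between ${\mathcal A}_{\bar \F}(z)$ and ${\mathcal A}_\F(z)$. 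The cost identity is then the usual change of variables $dt=\phi(y,v)\,ds$:
\[
\int_0^{T_x} l(x(t),u(t))\,dt=\int_0^{S_y} l(y(s),v(s))\,\phi(y,v)\,ds=\int_0^{S_y}\bar l(y(s),v(s))\,ds,
\]
valid because $l\ge 0$ and the change of variables is absolutely continuous and strictly monotone. Passing to the infimum over the bijectively corresponding admissible pairs yields $V(z)=\bar V(z)$.

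The main subtleties I anticipate are: (a)~handling the cases $S_y=+\infty$ or $T_x=+\infty$ in the exit-time correspondence, which is dispatched by the monotone-limit definitions $T_x=\lim_{s\to S_y^-}t(s)$ and $S_y=\lim_{t\to T_x^-}\sigma(t)$, together with the inequality $T_x\le S_y$ coming from $\phi\in(0,1]$; and (b)~confirming that both time changes are locally absolutely continuous with locally AC (respectively Lipschitz) inverses rather than merely continuous and monotone, so that the chain-rule identities genuinely hold a.e.\ --- this is precisely where the local boundedness of $\phi$ away from $0$ and from $\infty$ on compact subintervals is essential. Note that Hypothesis~\textbf{A} itself is not needed for this claim; it will enter later in the proof of Theorem~1.1, when extracting convergent subsequences of rescaled trajectories from the uniformly continuous family $(\bar l,\bar \F)$.
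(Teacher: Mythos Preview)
Your argument is correct and follows essentially the same route as the paper: build the time change $t(s)$, show its inverse is absolutely continuous, apply the chain rule, and carry over the cost by the standard change-of-variables formula. The only noteworthy difference is that you exploit the local $L^\infty$ bound on $v$ (part of the definition of admissibility) to make $\phi$ locally bounded away from $0$, obtaining a locally \emph{bi-Lipschitz} time change; the paper instead invokes the general theorem that an absolutely continuous, strictly increasing function has an absolutely continuous inverse (citing Natanson and Federer). Your approach is therefore more self-contained, while the paper's is slightly more general in principle---but since $v\in L^\infty_{\mathrm{loc}}$ is assumed throughout, the two are equivalent here. Your explicit treatment of the converse direction and of the possibly infinite exit times is also welcome, as the paper leaves these to the reader.
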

 \vskip0.2truecm
 Indeed, since $t=t(s)$ is absolutely continuous and $t'(s)>0$  almost everywhere,   the inverse map $s(\cdot)= t^{-1}(\cdot)$ is  absolutely continuous  (see e.g. \cite[Theorem 4, page 253]{Nat57} or, for a more general statement, \cite[Theorem 2.10.13, page 177]{Fed69}).  In particular, $x=y\circ s$ is absolutely continuous, and $u= v\circ s$ turns out to be Borel measurable as well.
Hence the claim  follows by a  standard application  of the chain rule\footnote{  Notice that  the solutions to $\dot x=f$ or $\dot y =\bar\F$ are not necessarily unique.}.

 \vskip0.2truecm  
The Hamiltonian $H_{\bar l,\bar \F}$ associated to $\bar l$, $\bar \F$,
$$
H_{\bar l,\bar \F}(x,p_0,p):= \inf_{u\in U}\Big\{ \langle p ,\bar \F(x,u) \rangle+p_0\,\bar l(x,u)\Big\} 
$$
for all $(x,p_0,p)\in (\Omega\backslash\T)\times \R^{1+n},$ is continuous and  sublinear in $(p_0,p)$, uniformly with respect to $x$.
Furthermore, it is also trivial to check  that, for every $(x,p_0,p)\in(\Omega\setminus\T)\times\RR^{1+n}$,
\bel{HH}
H_{\bar l,\bar{\F}}(x,p_0,p)<0 \quad\iff\quad
 H_{l,\F }(x,p_0,p)< 0.
\eeq
In particular, for every $\p\ge0$   $W$ is a $\p$-MRF for $(l,f,\T)$    if and only if  $W$ is a $\p$-MRF for $(\bar l,\bar{f},\T)$. Moreover,  because of Hypothesis {\bf A}, the problem $(\bar l,\bar\F,\T)$ meets   the hypotheses of Theorem \ref{nuovo} below. Therefore:

 \vskip0.2truecm  {\it  \begin{itemize}
 \item[(i)] if there exists  a $\p$-MRF $W$   for $(l,f,\T)$, then the  rescaled system in {\rm \eqref{peoblemrep}} is   GAC to $\T$, i.e. there exists a function $\beta\in{\mathcal K\mathcal L}$  such that for any $z\in\Omega\setminus\T$ there is an admissible trajectory-control pair  $(y,v)\in{\mathcal A}_{\bar f}(z)$  that verifies
\bel{bbound1}
 {\bf d}(y(s))\le \beta({\bf d}(z),s) \qquad \forall s\in[0,+\infty[;
\eeq
  \item[(ii)] moreover, if   $\p>0$,  then
  \bel{Wprop1}
 {{\bar V}}(z)\le \frac{W(z)}{{{\p}}}.
\eeq
\end{itemize}}
 \vskip0.2truecm
 
If $x(\cdot)$ is the trajectory defined in Claim \ref{cres},   one then  obtains
\bel{bbound2}
{\bf d}(x(t))\le\beta({\bf d}(z),s(t)) \qquad \forall t\in[0,+\infty[
\eeq
and, if $\p>0$,
  \bel{Wprop2}
 {{V}}(z)\le \frac{W(z)}{{{\p}}}\,  \qquad\forall z\in \Omega\setminus \T.
\eeq
Notice that $t(s)\leq s$ for all $s$, so that $t\leq s(t)$ for all $t$.
Since  the map $\beta(z,\cdot)$ is decreasing, one gets
 $$ \beta(z,s(t))\leq \beta(z,t)$$
for all $t$. It follows by \eqref{bbound1} that
\bel{bbound3} {\bf d}(x(t))\le \beta({\bf d}(z),t) \qquad \forall t\in[0,+\infty[,
\eeq
so the theorem is proved.
\end{proof}

 We conclude this section with   an  application of  Theorem \ref{stimaenergiaintro} to Mechanics. 
\vskip0.3truecm
\subsection{The gyroscope: controlling the  nutation through  precession and spin}\label{gyroex}

$\,$
\vskip0.3truecm
A gyroscope can be represented as  a mechanism composed by a rotor --in our setting a spinning disk--  and two gimbals. The spin axis of the rotor is 
fixed to the inner gimbal, whose spin axis is fixed to the outer gimbal (see Figure \ref{gyro2}).

\begin{figure}[h!]
 \hskip-1.5cm\includegraphics[scale=0.7]{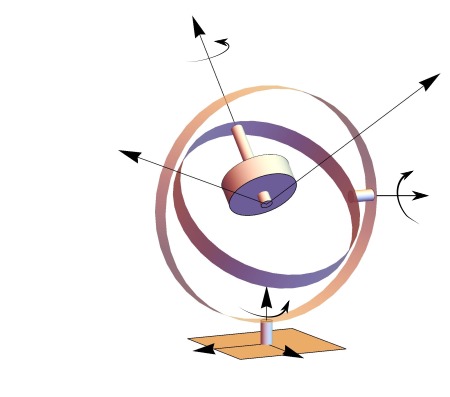}
 \put(-100,30){$0$}
 \put(-22,132){$ \theta$}
 \put(-90,14){$Y$}
 \put(-150,17){$X$}
 \put(-185,142){$x$}
 \put(-5,180){$y$}
 \put(-142,212){$z$}
 \put(-117,52){$Z$}
 \put(-125,195){$ \psi$}
 \put(-92,50){$ \phi$}
 
 \caption{\label{gyro2}}
\end{figure}

Besides an inertial  reference frame $OXYZ$  we consider a reference frame $oxyz$ fixed to  the rotor.  In particular, we choose the latter reference so that 
 the centre of mass of the rotor has coordinates $(0,0,z_G)$. The motion of the rotor can be parametrized by  Euler angles  as depicted 
in Figure \ref{gyro2}:  the outer gimbal's position is represented  by the \emph{precession} angle $\phi$, 
 the  inner  gimbal's position  is given by the \emph{nutation} angle $\theta$, and the  rotor's position  is measured  by the \emph{spin} angle $\psi$. 
The  kinetic energy (in the inertial frame) is so  given by
$$
{\mathcal T}=\frac{1}{2}I_0(\dot \phi^2\sin^2 \theta+ \dot \theta^2)+\frac{1}{2} I(\dot \phi \cos\theta + \dot\psi)^2,
$$
where $I_0$ is  the moment  of inertia of the rotor with respect to any  axis through $o$ and orthogonal to $z$  \footnote{All these moments coincide because of the symmetry of the rotor.} and $I$ is the moment of inertia of the rotor about its spin axis $oz$. We have  tacitly assumed that the rotor's mass $M$ is the only non-negligible mass of the system.    For simplicity, we also  suppose $I_0=I$.
If $g$ denotes  the gravitational acceleration,   the  potential  energy $\mathcal V $   is given by 
$$
\mathcal V(\theta):=M gz_G \cos\theta \quad \forall \theta\in[-\pi/2,\pi/2].
$$
We will  regard the precession velocity $\dot \phi$ and the spin   velocity $\dot \psi$   as {\it controls} belonging to $U=\RR^2$. Considering the predetermination of $\phi(\cdot) $ and $\psi(\cdot)$ as a  holonomic constraint,  we assume the classical  D'Alembert hypothesis (see \cite{aldobressan}). 

The resulting control mechanical system is   
\begin{equation}\label{gyrosys}
\begin{cases}
 \displaystyle{\dot\theta=\frac{1}{I}\pi_{\theta}}\\
\displaystyle{\dot \pi_{\theta}=Mgz_G\sin\theta-I\sin\theta \dot\phi\dot\psi},
\end{cases} 
\end{equation}
where  $\pi_\theta$ is the conjugate momentum  $\pi_\theta:=\frac{\partial ({\mathcal T}+\mathcal V)}{\partial\dot\theta}=I\,\dot\theta$.

If we set  $u:=(\dot \phi,\dot \psi)$, $x=(x_1,x_2)^{tr} :=(\theta,\pi_\theta)$ , $f_0(x)=(I^{-1}x_2,Mgz_G\sin x_1)^{tr}$, and $f_{11}(x)=(0,-I\sin x_1)^{tr}$ we obtain the control-quadratic control system
\begin{equation}\label{balgyro}
\dot x=f(x,u) :=f_0(x)+u_1u_2 f_{11}(x) , 
\end{equation}
with $(u_1,u_2)\in\RR^2$. 
 The state space of the control system \eqref{balgyro} is the open set  $\Omega=]-\pi/2,\pi/2[\times \R$ and we choose  $\T=\{(0,0)\}$ as a target and
$l(x_1,x_2)=x_2^2$ as a running cost \,.
\vskip0.4truecm

 Let us set  
 $$W(x_1,x_2):=W_1(x_1,x_2)(2-|W_2(x_1,x_2)|),$$
 where 
 \begin{align*}
&W_1(x_1,x_2):=\tan^2x_1+x_2^2,\\
  &W_2(x_1,x_2):=\begin{cases}
              \sin\left(2\text{arctan}\left(\frac{-\tan x_1+\sqrt{3} x_2}{\sqrt{3}\tan x_1+x_2}\right)\right)&\text{ if } x_2\not=-\sqrt{3}\tan x_1\\
              0&\text{ otherwise }.
             \end{cases}
 \end{align*}

With some computation,  one proves that
 \begin{Claim}\label{claimgyro}  For any  $\p< \min\{1/I,8\sqrt{3}/3\}$, the function $W$ is $p_0$-MRF for the problem $(f,l, \T)$.
\end{Claim}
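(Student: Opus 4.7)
The plan is to verify the four requirements of Definition~\ref{defMRF}: continuity of $W$ on $\Omega\setminus\overset{\circ}{\T}=\Omega$, local semiconcavity on $\Omega\setminus\T$, positive definiteness and properness there with the correct boundary behaviour, and the Hamiltonian inequality \eqref{MRH1}. The algebraic requirements follow from $1\le 2-|W_2|\le 2$, hence $W_1\le W\le 2W_1$: since $W_1=\tan^2 x_1+x_2^2$ vanishes only at the origin and blows up as $x_1\to\pm\pi/2$, this sandwich gives positive definiteness, properness on $\Omega\setminus\T$, and the boundary behaviour $W(x)\to +\infty=:W_0$ as $x\to\partial\Omega$.

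Continuity and local semiconcavity hinge on a polar-angle reading of $W_2$: letting $\alpha$ be the polar angle of the planar vector $(\tan x_1,x_2)$, the tangent-subtraction formula yields
\[
W_2(x_1,x_2)=\sin\bigl(2(\alpha-\pi/6)\bigr),
\]
so $W_2\in C^\infty(\Omega\setminus\{0\})$ (the apparent singularity along $\sqrt{3}\tan x_1+x_2=0$ is merely an artefact of the $\arctan$ representation, at which the formula passes through $\arctan(\pm\infty)=\pm\pi/2$). Using $W_1\ge 0$ one has the convenient rewriting
\[
W = W_1(2-|W_2|) = \min\bigl\{W_1(2-W_2),\; W_1(2+W_2)\bigr\},
\]
both terms being $C^\infty$ on $\Omega\setminus\{0\}$ and therefore locally semiconcave there; since the minimum of two locally semiconcave functions is locally semiconcave with the same modulus, $W$ inherits the property.

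For the Hamiltonian inequality note that, as $l$ does not depend on $u$ and $f(x,u)=f_0(x)+u_1u_2\,f_{11}(x)$ depends on $u$ only through the scalar $\tau:=u_1u_2\in\R$,
\[
H_{l,f}(x,\p,p) = \langle p,f_0(x)\rangle + \p x_2^2 + \inf_{\tau\in\R}\tau\langle p,f_{11}(x)\rangle,
\]
and the infimum equals $-\infty$ whenever $\langle p,f_{11}(x)\rangle = -Ip_2\sin x_1 \ne 0$. Consequently the inequality is automatic off the degenerate set $\{x_1=0\}\cup\{p\in D^*W(x)\mid p_2=0\}$, and on it the condition reduces to the scalar estimate
\[
I^{-1}p_1 x_2 + p_2\,Mgz_G\sin x_1 + \p x_2^2 < 0 \qquad\forall\,p\in D^*W(x).
\]

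To verify this I would compute $D^*W$ explicitly in the polar parametrization $\tan x_1=r\cos\alpha$, $x_2=r\sin\alpha$, differentiating at smooth points and, at the kink set $\{W_2=0\}$, taking both one-sided limits $\mathrm{sign}(W_2)\to\pm 1$. On the slice $\{x_1=0\}$ (so $\alpha=\pm\pi/2$) the calculation gives $p_1=-x_2$ and $p_2=(4-\sqrt{3})x_2$, whence the inequality collapses to $x_2^2(\p-1/I)<0$, explaining the threshold $\p<1/I$. On the residual locus where $p_2=0$ but $x_1\ne 0$, the gravity contribution $p_2\,Mgz_G\sin x_1$ vanishes identically, so $\nabla W_1$ alone cannot supply the required sign; the negative direction must come from the angle-dependent perturbation $-W_1|W_2|$, and an optimisation over the angles $\alpha$ at which $p_2=0$ produces the sharper threshold $\p<8\sqrt{3}/3$. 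The main obstacle is precisely this final step: both one-sided branches of the limiting subdifferential at the kinks must be controlled simultaneously, and the worst-case trigonometric optimisation that pins down the constant $8\sqrt{3}/3$ is the delicate part of the verification.
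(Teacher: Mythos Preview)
The paper itself omits the proof of this claim (it reads only ``with some computation, one proves that\ldots''), so there is nothing to compare line by line.  Your structural approach is sound and goes well beyond what the paper records: the polar--angle reading of $W_2$ and the identity $W_2=\sin\bigl(2(\alpha-\pi/6)\bigr)$ are correct, the rewriting $W=\min\{W_1(2-W_2),\,W_1(2+W_2)\}$ gives local semiconcavity on $\Omega\setminus\{0\}$ as a minimum of two smooth functions, and the sandwich $W_1\le W\le 2W_1$ cleanly yields positive definiteness, properness and the boundary blow--up $W_0=+\infty$.  The reduction of the Hamiltonian inequality to the locus $\{p_2\sin x_1=0\}$ is also right, and your computation on $\{x_1=0\}$ (giving $p_1=-x_2$, $p_2=(4-\sqrt{3})x_2$ and hence $H=x_2^2(\p-1/I)$) is correct and explains the threshold $\p<1/I$.

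The one place where your outline is shaky is the attribution of $8\sqrt{3}/3$ to the set $\{p_2=0,\ x_1\ne 0\}$.  If you actually carry out that branch (note that it lies entirely in the region where $W_1W_2<0$, so $W=2W_1+W_1W_2$ is smooth there) you get $\tan x_1=-(4+\sqrt{3})x_2$, $p_1=-12\,x_2\sec^2 x_1$, and therefore
\[
H \;=\; \p x_2^2 + I^{-1}p_1 x_2 \;=\; x_2^2\bigl(\p-12\,I^{-1}\sec^2 x_1\bigr),
\]
which is automatically negative once $\p<1/I$.  In particular the constraint arising from this locus involves $I$ and is \emph{weaker} than the $1/I$ you already obtained; it does not produce the $I$--independent constant $8\sqrt{3}/3$.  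The kink rays $\{W_2=0\}$ (namely $\tan x_1=\sqrt{3}\,x_2$ and $\tan x_1=-x_2/\sqrt{3}$) likewise have $p_2\ne 0$ for both one--sided gradients, so they contribute $H=-\infty$.  Hence your argument actually shows the stronger statement that $\p<1/I$ alone suffices, and the extra bound $8\sqrt{3}/3$ in the claim is either not sharp or stems from a coarser estimate the authors used; it is not the output of the locus you point to.
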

   
 Therefore, by Theorem \ref{stimaenergiaintro} we can conclude that the control system for the nutation $\theta$ and its conjugate moment $\pi_\theta$
 is \emph{GAC} to the origin. In addition, the optimal value $V$ of the minimum problem with running  cost equal to $\pi_\theta^2$ \ $(=I^2\dot\theta^2)$ verifies 
$$V(\bar \theta, \bar\pi_\theta)\leq  \frac{W(\bar\theta, \bar\pi_\theta)}{p_{0}}  $$ for all initial data 
$(\bar \theta, \bar\pi_\theta)$ and $\p<\min \{1/I,8\sqrt{3}/3\}$. 
Notice that, as it might be expected, the larger the  moment of inertia $I$ is,
the larger is the  provided  bound for $V$.

\vskip0.3truecm
\section{The rescaled problem}\label{rescsec}

The main step of the proof of Theorem \ref{stimaenergiaintro} is based on Theorem \ref{nuovo} below, which concerns  GAC and optimization  for a  cost-dynamics pair $({\bf l},{\mathbf{f}})$  verifying the following boundedness and  uniform continuity hypothesis:
\vskip0.3truecm
{\bf Hypothesis A$_{UC}$}\emph{ The vector field $({\bf l},{\mathbf{f}})$ is continuous on $( \Omega \backslash \T)\times U$ and, for every compact subset ${\mathcal K}\subset \Omega \backslash \T$, it   is bounded and  uniformly continuous on ${\mathcal K}\times U$.}
\vskip0.3truecm
We point out that the control set $U$ is still allowed to be unbounded.
\vskip0.3truecm

Let us consider the exit time optimal control problem
\bel{sis1}
y' = {\mathbf{f}}(y,v),  \quad y(0) = z, 
\eeq
\bel{cost1}
 {\bf V} (z) :=\inf_{(y,v)\in  \mathcal{ A}_{{\bf f}}(z) }\int_0^{T_y}\mathbf{l}(y(t),v(t)) dt.
\eeq
  
\begin{theorem}\label{nuovo}
Let us assume Hypothesis {\rm \bf A$_{UC}$}, and
let $W$ be   a $\p$-Minimum Restraint Function for the problem  $({\bf l},{\mathbf{f}},\T)$.  Then:
 \begin{itemize}
 \item[{\bf (i)}]  system {\rm \eqref{sis1}} is  GAC to $\T$;
  \item[{\bf (ii)}] moreover, if    $\p>0$,
  \bel{}
 {\bf V} (z) \leq  \frac{W(z)}{{{\p}}}\,  \qquad\forall z\in \Omega\setminus \T.
\eeq
\end{itemize}
 \end{theorem}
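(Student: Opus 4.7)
The strategy is the classical Lyapunov one for control Lyapunov functions: use \eqref{MRH1} to produce a pointwise ``feedback selection'' whose associated sampled trajectories decrease $W + p_0\int {\bf l}$, then pass to the limit. Hypothesis \textbf{A$_{UC}$} plays the role usually played by compactness of $U$, guaranteeing boundedness and equicontinuity of the approximating trajectories even though $v$ may range in an unbounded set.

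\emph{Step 1 (Pointwise selection).} For every $x \in \Omega\setminus\T$ the set $D^*W(x)$ is non-empty and compact, and \eqref{MRH1} gives $H_{{\bf l},{\bf f}}(x, p_0, p) < 0$ on it. Since $p \mapsto H_{{\bf l},{\bf f}}(x, p_0, p)$ is concave and upper semicontinuous (being an infimum of affine functions), there is $\delta(x) > 0$ such that $H_{{\bf l},{\bf f}}(x, p_0, p) < -3\delta(x)$ for every $p \in D^*W(x)$. Picking any $\bar p \in D^*W(x)$ and a near-infimising $\hat u(x) \in U$, the Lipschitz dependence in $p$ upgrades this to a single control $\hat u(x)$ satisfying
$$\langle p, {\bf f}(x, \hat u(x))\rangle + p_0\, {\bf l}(x, \hat u(x)) < -\delta(x) \qquad \forall p \in D^*W(x).$$

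\emph{Step 2 (Sampled trajectories and Lyapunov estimate).} Fix $z$ with $W(z) = c_0$. The sublevel set ${\mathcal K} := \{W \le c_0 + 1\}$ is compact by properness; on it Hypothesis \textbf{A$_{UC}$} provides a uniform bound $M$ on $|{\bf f}|$ and $|{\bf l}|$ \emph{for every} $u \in U$ together with a modulus of continuity $\omega$, while Lemma \ref{Lscv} furnishes uniform semiconcavity constants $L, \rho$. For each $n \ge 1$, partition $[0,+\infty)$ with mesh $1/n$ and define the sampled control $v_n \equiv \hat u(y_n(t_k))$ on $[t_k, t_{k+1})$ together with the trajectory $y_n' = {\bf f}(y_n, v_n)$. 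Writing $W(y_n(t_{k+1})) - W(y_n(t_k))$ via Lemma \ref{Lscv} applied at $y_n(t_k)$, using the selection of Step 1 and the modulus $\omega$ to replace $y_n(s)$ by $y_n(t_k)$ inside the integrand, one obtains
$$W(y_n(t_{k+1})) - W(y_n(t_k)) + p_0\!\!\int_{t_k}^{t_{k+1}}\!\! {\bf l}(y_n, v_n)\, ds \;\le\; -\frac{\delta(y_n(t_k))}{n} + \frac{\varepsilon_n}{n},$$
with $\varepsilon_n = L\omega(M/n) + \rho M^2/n \to 0$. Summing on $k$, the trajectory remains in ${\mathcal K}$ for $n$ large enough and the estimate extends up to the exit time $T_{y_n}$.

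\emph{Step 3 (Limit, GAC and cost bound).} Equi-Lipschitz continuity of $(y_n)$ yields a subsequence converging uniformly on compact intervals to some $y$; a Filippov-type measurable selection argument, crucially exploiting the uniform continuity in \textbf{A$_{UC}$} (since $U$ is unbounded one cannot weakly extract $v_n \to v$, so $v$ must be produced indirectly from the limit velocity field), furnishes an admissible $v$ with $\dot y = {\bf f}(y, v)$ a.e., and the inequality from Step 2 passes to the limit as
$$W(y(t)) + p_0 \int_0^t {\bf l}(y, v)\, ds \;\le\; W(z) - \int_0^t \delta(y(s))\, ds, \qquad 0 \le t < T_y.$$
Since $W$ is positive definite and proper, the resulting monotone decrease of $W$ along $y$ converts into a $\mathcal{KL}$-bound ${\bf d}(y(t)) \le \beta({\bf d}(z), t)$ by a standard Yoshizawa-type argument, proving \textbf{(i)}. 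When $p_0 > 0$, discarding the nonnegative $\delta$-term and letting $t \to T_y^-$ yields $p_0 \int_0^{T_y} {\bf l}(y, v)\, ds \le W(z)$, whence ${\bf V}(z) \le W(z)/p_0$, which is \textbf{(ii)}. The measurable selection in Step 3 is the main obstacle, and it is precisely Hypothesis \textbf{A$_{UC}$} that makes it work.
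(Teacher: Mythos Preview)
Your overall plan (feedback selection, sampled trajectories, Lyapunov decrease) matches the paper's, but the execution diverges from the paper at Step~3 in a way that leaves a genuine gap, and the paper's actual argument is built precisely so as to \emph{avoid} that step.

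The key point is that the paper never passes to a limit in the sampling parameter. Instead, in Proposition~\ref{claim1} and Proposition~\ref{cB} it first upgrades your pointwise $\delta(x)$ to a \emph{uniform} decrease rate $\gamma(r)$ on every level set $W^{-1}([r,2\sigma])$ (and simultaneously bounds the needed controls by $N(r)$). With this uniform $\gamma$, Proposition~\ref{claim2bis} shows that a \emph{single} sufficiently fine partition already produces a piecewise-constant control $v$ and trajectory $y$ satisfying the exact inequality
\[
W(y(t))-W(z)+p_0\!\int_0^t {\bf l}(y,v)\,ds\;\le\;-\frac{1}{\varepsilon+1}\,\gamma\bigl(W(y(t_k^{j-1}))\bigr)\,t,
\]
from which both the ${\mathcal{KL}}$ bound and the cost estimate follow by direct manipulation. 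No sequence $(y_n,v_n)$, no compactness extraction, no Filippov selection.

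Your Step~3, by contrast, requires you to produce from the limit trajectory $y$ a measurable $v$ with $\dot y={\bf f}(y,v)$ \emph{and} with the cost integral $\int {\bf l}(y,v)$ dominated by $\liminf\int {\bf l}(y_n,v_n)$. Hypothesis~{\bf A$_{UC}$} gives boundedness and uniform continuity of $({\bf l},{\bf f})$ on ${\mathcal K}\times U$, but this only ensures that weak-$*$ limits of $({\bf l},{\bf f})(y_n,v_n)$ land in $\overline{co}\,({\bf l},{\bf f})(y(t),U)$; neither the image $({\bf l},{\bf f})(y(t),U)$ nor its convex hull is assumed closed, and no convexity hypothesis is available to invoke a Filippov-type lemma that recovers an honest control realising both the velocity and the cost simultaneously. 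This is exactly the difficulty the paper sidesteps by working with a fixed fine partition. A secondary issue is that your $\delta(x)$ is merely pointwise, so even if the limit existed the inequality $\int_0^t\delta(y(s))\,ds$ on the right would not yield a ${\mathcal{KL}}$ estimate without the quantitative, level-set-uniform rate $\gamma$ that the paper establishes first.
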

\vskip0.3truecm
\subsection{Preliminary results}
$\,$
\vskip0.3truecm

  The proof of Theorem \ref{nuovo}  relies on Propositions \ref{claim1}, \ref{cB}, and \ref{claim2bis}  below. 
 Hypothesis  {\bf A$_{UC}$} is used throughout the whole subsection.
   \begin{proposition}\label{claim1}   
 For every $\sigma>0$ there exists a 
  continuous, increasing   map $\gamma:]0,2\sigma]\to ]0,+\infty[$ such that, for every $r\in]0,2\sigma]$,  
  \bel{c0}
  H_{{\bf l},{\mathbf{f}}} (x,\p,D^*W(x)) <-\gamma(r) \qquad \forall x\in W^{-1}([r,2\sigma])\, \text{ and } \,  p\in D^*W(x).
  \eeq
\end{proposition}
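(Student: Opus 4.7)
The plan is to exploit the properness of $W$ to reduce everything to a compact setting. Fix $r\in\,]0,2\sigma]$ and set $K_r:=W^{-1}([r,2\sigma])$, which is compact by properness of $W$ on $\Omega\setminus\T$ and contained in $\Omega\setminus\T$ by positive definiteness. On $K_r$ the MRF property yields the pointwise inequality $H_{{\bf l},\mathbf{f}}(x,\p,p)<0$ for every $p\in D^*W(x)$; the task is to make this negativity uniform and extract an explicit quantitative bound.

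First I would check that $H_{{\bf l},\mathbf{f}}(x,\p,\cdot)$ is jointly continuous on $K_r\times\R^n$. By Hypothesis~$\mathbf{A}_{UC}$, the pair $({\bf l},\mathbf{f})$ is bounded, say by $M_r>0$, and uniformly continuous on $K_r\times U$. Hence $H_{{\bf l},\mathbf{f}}(x,\p,p)$ is finite; being an infimum of affine functions of $p$, it is concave in $p$ and satisfies $|H_{{\bf l},\mathbf{f}}(x,\p,p)-H_{{\bf l},\mathbf{f}}(x,\p,p')|\le M_r|p-p'|$. A uniform-in-$u$ modulus $\omega$ of continuity of $({\bf l},\mathbf{f})$ in $x$ also gives $|H_{{\bf l},\mathbf{f}}(x,\p,p)-H_{{\bf l},\mathbf{f}}(x',\p,p)|\le(|p|+\p)\,\omega(|x-x'|)$, so $H_{{\bf l},\mathbf{f}}$ is jointly continuous on $K_r\times B$ for every bounded ball $B\subset\R^n$.

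Next I would consider $\Phi(x):=\max\{H_{{\bf l},\mathbf{f}}(x,\p,p):p\in D^*W(x)\}$; the maximum is attained because, by Lemma~\ref{Lscv}, $D^*W$ takes values in a fixed compact ball on $K_r$. Combining the upper semicontinuity of the compact-valued map $x\mapsto D^*W(x)$ with the joint continuity of the Hamiltonian, a standard Berge-type argument shows that $\Phi$ is upper semicontinuous on $K_r$. Since $\Phi<0$ pointwise and $K_r$ is compact, $\Phi$ attains a strictly negative maximum, so $c_r:=-\max_{K_r}\Phi>0$. Moreover the nesting $K_{r'}\subset K_r$ for $r\le r'$ forces the map $r\mapsto c_r$ to be non-decreasing on $\,]0,2\sigma]$.

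Finally I would smooth the monotone positive function $r\mapsto c_r$ into the desired $\gamma$. For example,
$$\gamma(r):=\frac{1}{4\sigma}\int_0^r c_s\,ds$$
is well-defined (the integrand is monotone, hence Borel measurable and bounded), continuous, and strictly increasing on $\,]0,2\sigma]$, and monotonicity of $c_\cdot$ yields $\gamma(r)\le r\,c_r/(4\sigma)\le c_r/2<c_r$. Hence for every $x\in K_r$ and every $p\in D^*W(x)$ one has
$$H_{{\bf l},\mathbf{f}}(x,\p,p)\le -c_r<-\gamma(r),$$
which is \eqref{c0}. The most delicate point is the uniform strict negativity of $\Phi$ on $K_r$: the unboundedness of $U$ prevents a direct appeal to Berge's theorem, which is exactly why the uniform bound and equicontinuity provided by Hypothesis~$\mathbf{A}_{UC}$ are indispensable.
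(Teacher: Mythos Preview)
Your proof is correct and follows exactly the route the paper indicates: the paper's own argument is a one-line sketch invoking the upper semicontinuity of $x\mapsto D^*W(x)$ together with the continuity of $(x,p)\mapsto H_{{\bf l},\mathbf f}(x,\p,p)$ on $W^{-1}([r,2\sigma])\times\R^n$, referring to \cite[Proposition~3.1]{MR13} for details, and you have supplied precisely those details (including the explicit construction of a continuous increasing $\gamma$ dominated by the level constants $c_r$). The only addition worth noting is your explicit averaging $\gamma(r)=\frac{1}{4\sigma}\int_0^r c_s\,ds$, which is a clean way to pass from the monotone step function $r\mapsto c_r$ to a continuous strictly increasing minorant; the paper does not spell this out.
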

This result  is a consequence of the upper semicontinuity of the set-valued map $x\to D^*W(x)$   together with the continuity of $(x,p)\mapsto H_{{\bf l},{\mathbf{f}}}$, when the  latter is restricted to  the sets $W^{-1}([r,2\sigma])\times \R^n$  (for the details, see \cite[Proposition 3.1]{MR13}).

\vv
\begin{proposition}\label{cB}  
For a given  $\sigma>0$, let $\gamma(\cdot)$ be  a map   as in  Proposition \ref{claim1}. Then there exists a continuous, decreasing function $N: ]0,2\sigma]\to ]0,+\infty[$ such that,  setting
 $$
 H_{{\bf l},{\mathbf{f}}, N(r)}(x,p_0,p):=\min_{u\in U\cap B(0,N(r))}\Big\{\langle p, {\mathbf{f}}(x,u)\rangle+\p {\bf l}(x,u) \Big\} \quad \forall r\in ]0,2\sigma],
 $$
 we get
\bel{c2'}
 H_{{\bf l},{\mathbf{f}}, N(W(x))} (x,p_0,D^*W(x))< -\gamma(W(x)) \qquad \forall x\in W^{-1}(]0,2\sigma]).
\eeq
 \end{proposition}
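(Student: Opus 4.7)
The plan is to fix a level $r \in (0, 2\sigma]$, produce via a compactness-plus-monotone-convergence argument some $N^*(r) > 0$ for which the required bound holds uniformly on the graph of $D^*W$ over $\{W \ge r\} \cap \{W \le 2\sigma\}$, and then assemble these values into a continuous, decreasing function $N$.

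First, properness of $W$ makes the sublevel region $K_r := W^{-1}([r, 2\sigma])$ compact in $\Omega \setminus \T$. Lemma \ref{Lscv} yields a uniform bound on $|p|$ for $p \in D^*W(x)$, $x \in K_r$, and the upper semicontinuity (with compact values) of $D^*W$ makes the graph
\[
G_r := \{(x, p) : x \in K_r,\ p \in D^*W(x)\}
\]
a compact subset of $\R^{2n}$. By Hypothesis {\bf A$_{UC}$}, $({\bf l}, {\mathbf{f}})$ is bounded and uniformly continuous on $K_r \times U$, hence extends continuously to $K_r \times \overline U$.

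Second, for each $N > 0$ I would introduce
\[
h_N(x, p) := \inf_{u \in U \cap B(0, N)}\bigl\{\langle p, {\mathbf{f}}(x, u)\rangle + \p\, {\bf l}(x, u)\bigr\},
\]
which, by the uniform-continuity extension, equals the (attained) infimum of the continuous extension over the compact set $\overline{U \cap B(0, N)}$. As a pointwise infimum of a family of jointly continuous functions over a compact parameter set, $h_N$ is upper semicontinuous on $G_r$. Since $N \mapsto h_N(x, p)$ is nonincreasing with pointwise limit $H_{{\bf l},{\mathbf{f}}}(x, \p, p)$, and Proposition \ref{claim1} gives $H_{{\bf l},{\mathbf{f}}}(x, \p, p) < -\gamma(r)$ for every $(x, p) \in G_r$, the compact nested family
\[
A_N^{(r)} := \bigl\{(x, p) \in G_r : h_N(x, p) \ge -\gamma(r)\bigr\}
\]
has empty intersection, and the finite intersection property yields an $N^*(r) > 0$ with $A_{N^*(r)}^{(r)} = \emptyset$; in particular $h_{N^*(r)}(x, p) < -\gamma(r)$ on $G_r$.

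Third, to assemble the $N^*(r)$'s into a continuous decreasing $N$, I would first check that $N^*(\cdot)$ is locally bounded on $(0, 2\sigma]$: if $\rho_n \in [r, 2\sigma]$ with $N^*(\rho_n) \to +\infty$, compactness of $G_r$ and upper semicontinuity of $h_N$ produce a limit point $(x^*, p^*) \in G_{\rho^*}$ at which $h_N(x^*, p^*) \ge -\gamma(\rho^*)$ for every $N$, hence $H_{{\bf l},{\mathbf{f}}}(x^*, \p, p^*) \ge -\gamma(\rho^*) \ge -\gamma(W(x^*))$, contradicting Proposition \ref{claim1}. Then $\tilde N(r) := \sup_{\rho \in [r, 2\sigma]} N^*(\rho)$ is finite and nonincreasing, and a routine smoothing (e.g.\ convolving with a mollifier on a scale comparable to $r$ and adding a decreasing term such as $1/r$) produces a continuous, strictly decreasing $N \ge \tilde N$. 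Monotonicity of $h_N$ in $N$ propagates the bound to \eqref{c2'}.

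The main obstacle will be step two --- turning the pointwise, monotone limit $h_N \downarrow H_{{\bf l},{\mathbf{f}}}(\cdot, \p, \cdot)$ into a \emph{uniform}-on-$G_r$ reduction to a bounded set of controls. A priori, near-minimizing $u$ at different $(x, p) \in G_r$ could escape to infinity at unrelated rates; the escape is ruled out only because Hypothesis {\bf A$_{UC}$} provides the continuous extension to $\overline{U \cap B(0, N)}$, and hence the upper semicontinuity of $h_N$, which combined with the compactness of $G_r$ lets the finite intersection property carry the day.
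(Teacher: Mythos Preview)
Your argument is correct and reaches the same conclusion as the paper, but by a different mechanism. The paper argues by direct sequential contradiction: assuming no bound $N(r)$ works, it picks $(x_k,p_k)\in G_r$ for which every control achieving the strict inequality has norm $>k$, extracts a limit $(\bar x,\bar p)\in G_r$ by compactness and upper semicontinuity of $D^*W$, finds a \emph{single} control $\bar u$ with $\langle \bar p,\mathbf f(\bar x,\bar u)\rangle + p_0\,\mathbf l(\bar x,\bar u)<-\gamma(r)$, and then uses continuity in $x$ (at that fixed $\bar u$) to push the inequality back to $(x_k,p_k)$ for large $k$, contradicting $|\bar u|>k$. You instead run a Dini/finite-intersection argument on the monotone family $h_N\downarrow H_{\mathbf l,\mathbf f}$ over the compact graph $G_r$. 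Two minor remarks: first, the extension of $(\mathbf l,\mathbf f)$ to $K_r\times\overline U$ is not actually needed --- $h_N$ is a pointwise infimum of continuous functions of $(x,p)$, hence automatically upper semicontinuous, which is all your FIP step uses; second, your step on local boundedness of $N^*$ is more careful than the paper (which simply asserts that $N$ may be taken decreasing and then smoothed), and could be shortened by running the FIP directly with the $x$-dependent threshold $-\gamma(W(x))$ instead of $-\gamma(r)$, which makes the resulting bound monotone in $r$ by construction.
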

 \begin{proof} Given $r\in]0,2\sigma]$, let us first show that  there exists some $N(r)$ such that 
 \bel{c0'}
  H_{{\bf l},{\mathbf{f}}, N(r)} (x,\p,D^*W(x))<-\gamma(r)<0 \qquad \forall x\in W^{-1}([r,2\sigma])\, \text{ and } \,  p\in D^*W(x).
  \eeq
Assume by contradiction that for any integer $k$ there is some pair $(x_k,p_k)$ with $x_k\in W^{-1}([r,2\sigma])$ and $p_k\in D^*W(x_k)$ such that,  
\bel{fk}
\Big( u \in U: \quad \langle p_k,{\mathbf{f}}(x_k,u )\rangle+\p {\mathbf{l}}(x_k,u ) <-\gamma(r)<0\Big) \ \Longrightarrow \ |u|>k 
\eeq
  (by Proposition \ref{claim1}, controls verifying the inequality surely exist).  Because of the compactness of $W^{-1}([r,2\sigma])$ and of the upper semicontinuity of the set-valued map $D^*W(\cdot)$, there is a subsequence, which we still denote  $(x_k,p_k)$, converging to some $(\bar x,\bar p)$ such that $\bar x\in W^{-1}([r,2\sigma])$ and $\bar p\in D^*W(\bar x)$.  Since $W$  verifies (\ref{c0}), there is some $\bar u\in U$   such that
 $$
\alpha:=\langle \bar p, {\mathbf{f}}(\bar x, \bar u)\rangle+\p{\mathbf{l}}(\bar x, \bar u)<-\gamma(r)<0.
 $$
Thus,   the uniform continuity of  the maps ${\bf l}$, ${\mathbf{f}}$  on $W^{-1}([r,2\sigma])\times U$  implies that  
 $$
\langle (p_k, {\mathbf{f}}(x_k,\bar u)\rangle+\p {\mathbf{l}}(x_k,\bar u)+\gamma(r)<\frac{\alpha}{2}<0 \qquad \forall k\ge \bar k,
 $$
 some integer $\bar k$, which contradicts   \eqref{fk} as soon as $k>|\bar u|$.
 
\noindent  Moreover, for every $r_1,r_2\in ]0,2\sigma]$, $r_1< r_2$,  one clearly has $N(r_1)\ge N(r_2)$ and, enlarging $N(r)$ if necessary, one can assume  the map $r\mapsto N(r)$ continuous. Therefore, for any  $x\in W^{-1}(]0,2\sigma])$,  the thesis (\ref{c2'}) follows from (\ref{c0'}) as soon as $r= W(x)$.
 \end{proof}
 
 \vv
Let us introduce the following definition, useful in the sequel. 
 \begin{definition}\label{FDBK}  
 Let $\sigma>0$  and fix a selection $p(x)\in D^*W(x)$ for any  $x\in W^{-1}(]0,2\sigma])$.   Let $\gamma(\cdot)$, $N(\cdot)$ be  the same as in  Proposition \ref{cB}.   We call a {\rm feedback} on $W^{-1}(]0,2\sigma])$ a map  
 $$
 x\mapsto {\bf u}(x)\in U\cap B(0,N(W(x))
 $$ 
  verifying
\bel{feed1}
\langle p(x), {\mathbf{f}}(x,{\bf u}(x))\rangle+\p {\mathbf{l}}(x,{\bf u}(x)) <-\gamma(W(x)) 
\eeq
for every $x\in W^{-1}(]0,2\sigma])$.
\end{definition}
 
Moreover, for any $\mu>0$ and any continuous path $\tilde y:[\tau, +\infty[\to\R^n$ such that  $W(\tilde y(\tau))>\mu$,  we  define the time to reach the enlarged target $W^{-1}([0,\mu])$ as
\bel{Tz}
{\mathcal T}_{\tilde y}^\mu\, :=\inf\{r\ge\tau: \ W(\tilde y(r))\le \mu\}
\eeq
(in particular, ${\mathcal T}_{\tilde y}^\mu=+\infty$ if $W(\tilde y(r))> \mu$ for all $r\ge\tau$).
\vv
\begin{proposition}\label{claim2bis}  
 Fix  $\sigma\in]0,W_0[$, and let $\gamma(\cdot)$, $N(\cdot)$ be  as in  Propositions \ref{claim1}, \ref{cB}. Moreover, let $\varepsilon$, $\bar\mu$,  $\hat\mu$  verify $\varepsilon>0$ and $0<\hat\mu<\bar\mu\le\sigma$.  Then there exists some $\delta>0$ such that,  for every  partition $\pi=(t^j)$ of $[0,+\infty[$ with diam$(\pi)\le\delta$ \footnote{ A  {\it partition}  of $[0,+\infty[$ is a sequence $\pi=(t^j) $ such that
$t^0=0, \quad t^{j-1}<t^j$ \, $\forall j\ge 1$, and
   $\lim_{j\to+\infty}t^j=+\infty$.  The number 
    diam$(\pi)\doteq\sup(t^{j }-t^{j-1})$
   is called the {\rm diameter}  of the sequence $\pi$.} and  for each $x\in\Omega\setminus\T$ satisfying $W(x)=\bar\mu$,   there are a piecewise constant control $v:[0,\hat t]\to  U\cap B(0,N(\hat\mu))$ and a  solution $y:[0,\hat t]\to W^{-1}([\hat\mu,\bar\mu])$ to the Cauchy problem
$$
y'=  {\mathbf{f}}(y,v), \qquad y(0)=x,
$$
enjoying following properties:
\begin{itemize}
\item[\bf (a)]  $\hat t:={\mathcal T}^{\hat\mu}_y<+\infty$ and $\bar n:=\sup\{j\ge1: t^{j-1}< {\mathcal T}^{\hat\mu}_y\}<+\infty$.
 \item[\bf (b)] 
  for every  $t\in[0,\hat t[$ and $j\ge1$   such that  $t\in[t^{j-1}, t^j[$,
\bel{dainserire}
  W(y(t))-W(y(t^{j-1}))+\p  \int_{t^{j-1}}^t \mathbf{l}(y(\tau),v(\tau))\,d\tau  \le -\frac{\gamma(W(y(t^{j-1}))) }{\varepsilon+1}(t -t^{j-1}).
\eeq
\end{itemize}
 \end{proposition}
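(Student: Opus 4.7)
The strategy is a sample-and-hold (Euler) scheme with $W$ playing the role of a Lyapunov function and the feedback $\mathbf{u}$ of Definition \ref{FDBK} as input selector. The set $\mathcal{K}:=W^{-1}([\hat\mu,\bar\mu])$ is compact by properness of $W$ and contained in $\Omega\setminus\T$; choose a relatively compact open neighborhood $\mathcal{K}'$ of $\mathcal{K}$ with $\overline{\mathcal{K}'}\subset\Omega\setminus\T$. Lemma \ref{Lscv} applied on $\overline{\mathcal{K}'}$ yields semiconcavity constants $L,\rho>0$ for $W$. Since $N$ is decreasing and $W(y(t^{j-1}))\ge\hat\mu$ along every admissible portion of the trajectory to come, the feedback values will satisfy $|\mathbf{u}(y(t^{j-1}))|\le N(\hat\mu)$; by Hypothesis \textbf{A}$_{UC}$ the maps $\mathbf{f},\mathbf{l}$ are then bounded on $\overline{\mathcal{K}'}\times\bigl(U\cap B(0,N(\hat\mu))\bigr)$ by some $M>0$, and are uniformly continuous in $x$ with a common modulus $\omega$. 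Since $\gamma$ is increasing, $\gamma(W(y))\ge\gamma(\hat\mu)>0$ throughout $\mathcal{K}$.

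\textbf{Construction and Lyapunov estimate.} Given a partition $\pi=(t^j)$ with $\mathrm{diam}(\pi)\le\delta$, define $y$ and $v$ recursively on $[t^{j-1},t^j]$ as long as $W(y(t^{j-1}))>\hat\mu$: pick $p_{j-1}\in D^*W(y(t^{j-1}))$ and $u_{j-1}:=\mathbf{u}(y(t^{j-1}))$, set $v\equiv u_{j-1}$, and solve $y'=\mathbf{f}(y,u_{j-1})$ with initial datum $y(t^{j-1})$. Imposing $M\delta<\mathrm{dist}(\mathcal{K},\partial\mathcal{K}')$, the a priori bound $|y(t)-y(t^{j-1})|\le M(t-t^{j-1})$ keeps the trajectory inside $\mathcal{K}'$. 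For $t\in[t^{j-1},t^j]$ one combines the semiconcavity inequality
$$W(y(t))-W(y(t^{j-1}))\le\langle p_{j-1},y(t)-y(t^{j-1})\rangle+\rho|y(t)-y(t^{j-1})|^2$$
with $y(t)-y(t^{j-1})=\int_{t^{j-1}}^t\mathbf{f}(y(\tau),u_{j-1})\,d\tau$, the feedback inequality \eqref{feed1} evaluated at $y(t^{j-1})$, and the oscillation bounds $|\mathbf{f}(y(\tau),u_{j-1})-\mathbf{f}(y(t^{j-1}),u_{j-1})|\le\omega(M\delta)$ and $|\mathbf{l}(y(\tau),u_{j-1})-\mathbf{l}(y(t^{j-1}),u_{j-1})|\le\omega(M\delta)$. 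Collecting terms yields
$$W(y(t))-W(y(t^{j-1}))+\p\int_{t^{j-1}}^t\mathbf{l}(y,v)\,d\tau\le-(t-t^{j-1})\gamma(W(y(t^{j-1})))+(t-t^{j-1})R(\delta),$$
with $R(\delta):=(L+\p)\omega(M\delta)+\rho M^2\delta\to0$ as $\delta\to0^+$. Choosing $\delta$ so small that $R(\delta)\le\frac{\varepsilon}{\varepsilon+1}\gamma(\hat\mu)$ and using $\gamma(W(y(t^{j-1})))\ge\gamma(\hat\mu)$ produces exactly \eqref{dainserire}, i.e.\ item \textbf{(b)}.

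\textbf{Finiteness (a) and main obstacle.} Since $\mathbf{l}\ge0$, inequality \eqref{dainserire} forces $W\circ y$ to be non-increasing on each $[t^{j-1},t^j]$, so the trajectory in fact never leaves the smaller set $\mathcal{K}$ before its exit time, validating a posteriori the use of the semiconcavity constants. Iterating \eqref{dainserire} at the sampling times gives $W(y(t^j))\le\bar\mu-\frac{\gamma(\hat\mu)}{\varepsilon+1}t^j$, which drops below $\hat\mu$ after finitely many steps; hence $\bar n<+\infty$ and, by continuity of $W\circ y$, there is a first time $\hat t<+\infty$ with $W(y(\hat t))=\hat\mu$, proving \textbf{(a)}. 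The delicate point is the bootstrap between ``$y$ stays in $\mathcal{K}'$'', which is needed to invoke the semiconcavity inequality, and the Lyapunov decrease, which is what then forces $y$ to remain in the smaller set $\mathcal{K}$; this circularity is resolved precisely by separating the two compact sets $\mathcal{K}\subset\mathcal{K}'$ and restricting $\delta\le\mathrm{dist}(\mathcal{K},\partial\mathcal{K}')/M$.
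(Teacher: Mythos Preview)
Your proof is correct and follows essentially the same sample-and-hold scheme as the paper: feedback from Definition~\ref{FDBK}, the semiconcavity inequality for $W$, and a smallness condition on $\delta$ that absorbs the remainder into $\tfrac{\varepsilon}{\varepsilon+1}\gamma$. The only technical variation is that the paper confines the trajectory by multiplying $\mathbf{f}$ with a smooth cut-off $\psi$ (equal to $1$ on $W^{-1}([\hat\mu/2,\sigma])$, supported in $W^{-1}([\hat\mu/4,2\sigma])$), whereas you use the nested compacts $\mathcal{K}\subset\mathcal{K}'$ together with $M\delta<\mathrm{dist}(\mathcal{K},\partial\mathcal{K}')$; both devices resolve the same bootstrap between existence/confinement and the Lyapunov decrease.
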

 \begin{proof} 
 Let $p(\cdot)$ be a selection of $D^*W$ on $W^{-1}({[{\hat\mu/4},{2\sigma}]})$  and let us consider a feedback ${\bf u}$ as in Definition  \ref{FDBK}.   Let $M$  denote the  sup-norm of ${\mathbf{f}}$  on $W^{-1}({[{\hat\mu/4},{2\sigma}]})\times U$, and 
let $\omega_{{\bf l}}(\cdot)$ be the modulus of continuity of ${\bf l}$ on $W^{-1}({[{\hat\mu/4},{2\sigma}]})\times U$.
By the local semiconcavity and the properness of  $W$,  Lemma \ref{Lscv} implies that  there exist
  $\rho$, $L>0$ such that,  for any $x$ belonging to the compact set $W^{-1}({[{\hat\mu/4},{2\sigma}]})$,  one has \footnote{The inequality (\ref{scv}) is usually formulated with the proximal superdifferential  $\partial^P F$ instead of $\partial_C F$. However, this does not make a difference here since $\partial^P F=\partial_C F$ as soon as $F$ is locally semiconcave.}
\bel{scv}
W(\hat x)-W(x)\le  \langle p,\hat x-x \rangle+\rho\,|\hat x-x|^2 \qquad \forall p\in D^*W(x),
\eeq
for every $\hat x$ such that the segment  $[x,\hat x]\subset W^{-1}({[{\hat\mu/4},{2\sigma}]})$, and
\bel{Lip}
 |p|\le L \qquad \forall p\in D^*W(x).
\eeq
Let   $\psi:\R^n\to[0,1]$ be a $C^{\infty}$ (cut-off) map such that
\bel{psi}
 \psi = 1 \quad\hbox{on}\quad W^{-1}([{\hat\mu/2}, \sigma]) , \qquad \psi = 0
 \quad\hbox{on}\,\,\, \R^n\backslash W^{-1}([{\hat\mu/4}, {2\sigma}])\,.
\eeq
Let $\omega$ denote the modulus of continuity of the product $(\psi\,{\mathbf{f}})$ on $\R^n\times U$.

\noindent We set
\bel{E1}
\delta:=\min\left\{  \frac{\hat\mu}{2LM},  \delta_2\right\},
\eeq
where   $\delta_2>0$   verifies 
\bel{E2}
\frac{L\,\omega  \left(
M\, \delta_2\right) +
\rho\,  M^2 \, \delta_2 +\p\,\omega_{{\bf l} } \left(
 M\,\delta_2\right)}{\gamma(\hat\mu/4)}= \frac{\varepsilon}{\varepsilon+1}.
\eeq
Let $ \pi=(t^j)$ be an arbitrary partition of $[0,+\infty[$ such that  diam$(\pi)\le \delta$.  For each $x\in\Omega\setminus\T$  verifying $U(x)=\bar\mu$,  define recursively a sequence of trajectory-control  pairs $(y^j,v^j):[t^{j-1},t^j]\to \Omega\times U$, $j\ge1$,
as follows:
\begin{itemize}
\item  $y^1(t^0):= x^1:= x\, , \ \ v^1:= {\bf u}(x^1);$
 \item  for every  $j> 1$,
$$
y^{j }(t^{j-1}):= y^{j-1}(t^{j-1}):= x^j\,, \quad v^j:= {\bf u}(x^j);
$$
 \item for every $j\ge 1$,   $y^j:[t^{j-1},t^j]\to\R^n$ is a solution of the Cauchy problem
$$
y'(t) = \psi(y)\,{\mathbf{f}}(y,v^j)  \quad
y (t^{j-1}) = x^j.
$$
\end{itemize}
Notice that, by the continuity of the vector field and because of the cut-off factor $\psi$,  any trajectory $y^j(\cdot)$ exists globally  and cannot exit the compact subset $W^{-1}({[{\hat\mu/4},{2\sigma}]})$. 
Let us set
$$
(y(t), v(t)):=(y^j(t), v^j) \ \ \forall t\in[t^{j-1},t^j[, \quad \text{for every $j\ge1$.}
$$
In view of  the $L$-Lipschitz continuity of $W$ on $W^{-1}({[{\hat\mu/4},{2\sigma}]})$, the condition $\delta\le \hat\mu/2LM$ in (\ref{E1}),    implies that
  $ |W(y^j(t))- W(x^j)|\le L|y^j(t)- x^j|\le \hat\mu/2,$
  so that 
  $$
  W(y^j(t))\ge \hat\mu/2   \quad \forall t\in [t^{j-1}, t^j], \quad \text{for every $j\ge1$,}  
  $$
  as soon as  $W(x^j)\ge \hat\mu$.

\noindent Recalling that $|\psi|\le 1$ and  $\psi( x^j)=1$ when $x^j\in W^{-1}([\hat\mu/2,2\sigma])$,   (\ref{feed1}) and  (\ref{scv}) and  imply that, for every  $j\ge1$ such that $t^{j-1}< {\mathcal T}^{\hat\mu}_y$ (see Definition \ref{Tz}),     one has, $\forall t\in [t^{j-1}, t^j]$,
 \begin{align*}
  &W(y^j(t))-W(x^j) +\p\int_{t^{j-1}}^t \mathbf{l}(y^j(\tau),v^j)\,d\tau\le  \langle p(x^j),y^j(t)- x^j\rangle+\rho|y^j(t)- x^j|^2 +\\
& \p\int_{t^{j-1}}^t  \left[ \mathbf{l}(y^j(\tau),v^j)- \mathbf{l}(x^j,v^j)\right]\,d\tau+ \p\,  {\bf l} (x^j,v^j)(t-t^{j-1})\\
&\le  \left\langle p(x^j),\int_{t^{j-1}}^{t }\left[\psi(y^j(\tau)) \, {\mathbf{f}}(y^j(\tau),v^j)-{\mathbf{f}}(x^{j},v^j )\right]\,d\tau\right\rangle 
\\&+ \rho\left( \int_{t^{j-1}}^{t}\left|\psi(y^j(\tau)){\mathbf{f}}(y^j(\tau),v^j)\right|\,d\tau\right) ^2  +\p\, \omega_{{\bf l} } \left(M   \, (t^j-t^{j-1})\right)\, (t-t^{j-1})\\ 
&+  \left\langle p(x^j),{\mathbf{f}} (x^{j},v^j) \right\rangle\, (t-t^{j-1}) +\p\, \mathbf{l}(x^j,v^j)(t-t^{j-1})\\
\le &~L\,\omega  \left( M \, (t^j-t^{j-1})\right)\, (t-t^{j-1})+
\rho\, M^2  \, (t-t^{j-1})^2 \\
&+\p\, \omega_{{\bf l} } \left(M   \, (t^j-t^{j-1})\right)\, (t-t^{j-1})-
   \gamma(W(x^j))(t-t^{j-1})   \\
 \le &\left[\frac{L\,\omega  \left(
M \, (t^j-t^{j-1})\right) +
\rho\,M^2 \, (t^j-t^{j-1})+ \p\,\omega_{{\bf l} } \left(
 M \, (t^j-t^{j-1})\right)}{ \gamma(W(x^j))}-1\right] \\
 &\cdot \gamma(W(x^j))( t-t^{j-1}).
\end{align*}
Since $ \forall t\in[t^{j-1},t^j]$,  $t-t^{j-1}\le \delta \le \delta_2$,  by (\ref{E2})  it follows that
\bel{stimaE}
W(y^j(t))-W(x^j)+\p\int_{t^{j-1}}^t \mathbf{l}(y^j(\tau),v^j)\,d\tau\le -\frac{    \gamma(W(x^j))}{\varepsilon+1}(t -t^{j-1}),
\eeq
  which implies, also recalling the definition $x^j= y^{j-1}(t^{j-1})$,
\bel{stimaE2}
\begin{split}
 W(y(t))-W(x)&+\p\int_0^t \mathbf{l}(y(\tau),v(\tau))\,d\tau\\
 &=[W(y^j(t))-W(x^j)]+\dots + [W(y^1(t^1))-W(x)] \\
 & + \p\int_{t^{j-1}}^t \mathbf{l}(y^j(\tau),v^j)\,d\tau +\dots +\p\int_{0}^{t^1} \mathbf{l}(y^1j(\tau),v^1)\,d\tau\\
 & \le  -\frac{\gamma(W(x^j))(t-t^{j-1})+   \sum_{i=1}^{j-1} \gamma(W(x^i))(t^i-t^{i-1})}{\varepsilon+1}.
\end{split}
\eeq

 In particular, (\ref{stimaE2})   yields   that $W(y(t))\le\ W(x)=\bar\mu$ for all $t\in[0,t^j]$.

\vv
Notice that ${\mathcal T}^{\hat\mu}_y<+\infty$.  Indeed,
   if   by contradiction  ${\mathcal T}^{\hat\mu}_y=+\infty$,  (\ref{stimaE2}) held true for all $t\in[0,t^j]$ with   $j$ arbitrarily large, i.e. (since $(t^j)$ is  a partition of $[0,+\infty[$),  for all $t\ge0$. Therefore,   recalling that $\gamma(W(x^i))\ge \gamma(\hat\mu/4)>0$ for all $i=1,\dots,j$,  one would have $\lim_{t\to+\infty} W(y(t))= 0$, 
   which is not allowed, since,  by the definition of ${\mathcal T}^{\hat\mu}_y$,
   \bel{fuori}W(y(t))>\hat\mu\qquad \forall t\in[0,{\mathcal T}^{\hat\mu}_y[ .\eeq   Let us set  
 $$
 \hat t:= {\mathcal T}^{\hat\mu}_y (<+\infty),
 $$
 so that $\bar n$ reads
 $$
 \quad  \bar n=\sup\{j\ge1: t^{j-1}< \hat t\}.
 $$
 Let us observe that $\bar n<+\infty$.   
 Finally, notice that, because of (\ref{fuori}), $\psi(y(t) )= 1$ for every $t\in[0, t^{\bar n}]$. Hence,  for any $j\in\{1,\dots,\bar n\}$,  $y^j(\cdot)$ is  a solution of
 $$
 \frac{dy}{dt} = {\mathbf{f}}(y,v^j)  \ \ \forall t\in[t^{j-1},t^j], \quad
y (t^{j-1}) = x^j.
$$
It follows that conditions  {\bf (a)}--{\bf (b)}  are  satisfied.
\end{proof}

\vv
 \subsection{Proof of Theorem  \ref{nuovo}}
 
 $\,$
\vskip0.3truecm
 Let   $\sigma\in]0, W_0[$   and  let $\gamma(\cdot)$, $N(\cdot)$ be defined as in Proposition \ref{cB}. Fix $\varepsilon>0$ and let $(\nu_k)\subset]0,1]$ be   a sequence   such that $1=\nu_0>\nu_1>\nu_2>\dots$ and $\lim_{k\to\infty}\nu_k=0$.   Assume that $z\in W^{-1}(]0,\sigma])$ and set
$$
\mu_k:= \nu_k W(z) \quad \forall k\ge 0.
$$
We are  going to exploit   Proposition \ref{claim2bis} in order to build  a trajectory-control pair
$$
(y,v):[0,\bar t[\to (\Omega\setminus\T)\times U
$$
by   concatenation 
$$
(y(t),v(t)) = (y_k(t),v_k(t)) \quad  \forall t\in [t_{k-1},t_k[, \quad \forall k\ge1,
$$
where the pairs $ (y_k(t),v_k(t))$ are described by induction as follows.
\vv

{\it The case  $k=1$.} Let us begin by constructing  $(y_1,v_1)$.  Let us set    $\bar\mu=\mu_0$,  $\hat\mu=\mu_1$,  and let us build a trajectory-control pair    
$$
(y_1,v_1):[0,\hat t]\to W^{-1}([\mu_1,\mu_0])\times U\cap B(0,N(\mu_1)), \qquad y_1(0)=z,
$$ 
 according to  Proposition \ref{claim2bis}.  We set $t_0:= 0$ and $t_1:= \hat t$ and   observe that,  in view of {\bf (a)} in Proposition \ref{claim2bis}, $t_1= {\mathcal T}_{y_1}^{\mu_1}$.

{\it The case  $k>1$.}  Let us define $(y_k,v_k)$ for $k> 1$.  Let us set     $\bar\mu=\mu_{k-1 }$,  $\hat\mu=\mu_{k }$,  and  construct
$$
(\hat y_k,\hat v_k):[0,\hat t]\to W^{-1}([\mu_k,\mu_{k-1}])\times U\cap B(0,N(\mu_k)), \qquad \hat y_k(0)=y_{k-1}(t_{k-1}),
$$
still  according to  Proposition \ref{claim2bis}. We set  $t_k:= t_{k-1}+\hat t$ and  $(y_k,v_k)(t)=(\hat y_k,\hat v_k)(t-t_{k-1})$ $\forall t\in[t_{k-1},t_k]$. We observe that $t_k=  {\mathcal T}_{y_k}^{\mu_k}$.

The concatenation procedure is concluded as soon as we set   $\bar t :=  \lim_{k\to \infty} t_k$. Notice that it may well happen that $\bar t=+\infty$. 

 We claim that
\bel{raggiunge}
\lim_{t\to\bar t^-}  {\bf d}(y(t)) = 0.
\eeq
 Indeed, for every $k\ge1$,    Proposition \ref{claim2bis}  yields the existence of a finite partition $\pi_k=\{\hat t^0_k,\dots,\hat t^{\bar n_k}_k\}$ of $[0,t_k- t_{k-1}]$ such that, setting,
$$t_k^j:=  t_{k-1}+\hat t^j_k  \qquad\forall j\in\{0,\dots, \bar n_k\},
$$
one has      $y(0)\,(=y_1(0))=z$,  and,   for every $k\ge1$:
\begin{itemize}
\item[\bf (a)$_k$]
$y_{k+1}(t_{k}) = y_{k}(t_{k})$, \, $W(y_k(t_{k-1})) = \mu_{k-1 }$; and
 \newline   $W(y_k (t_k))<W(y_k(t))\le W(y_k(t_{k-1}))\le W(z)$   \,  $\forall t\in[t_{k-1} ,t_k[$;
\item[\bf (b)$_k$]  for all $j\in\{1,\dots, \bar n_k\}$, \newline
$W(y_k (t))-W(y_k(t_k^{j-1})) +\p\int_{t_k^{j-1}}^t \mathbf{l}(y_k^j(\tau),v_k(\tau))\,d\tau  \le$ \newline
\hphantom{mmmmmmmmmmmmmm} $  -\frac{1}{\varepsilon+1} \gamma(W(y_k(t_k^{j-1})))(t-t_k^{j-1}) $   \, $\forall   t\in[t_k^{j-1}, t_k^j[$.
 \end{itemize}
In particular, by  {\bf (a)$_k$}, claim (\ref{raggiunge}) is equivalent to
\bel{discrete}
 \lim_{k\to\infty} {\bf d}(y_k(t_{k}))=0.
\eeq
Since  $W$ is proper and  positive definite,  (\ref{discrete}) is a straightforward consequence  of
$$\lim_{k\to\infty} W(y_k(t_k)) = \lim_{k\to\infty} \nu_{k }\,W(z) = 0,$$
so   (\ref{raggiunge}) is verified as well.

\vv
  We now need precise estimates of both the decreasing rate of $W$ and  the cost gain along $(y,v)$.   
  
  Let us consider $t$, $k$, $j$ such that $t<\bar t$ and  $t\in[t_{k}^{j-1}, t_k^j[$.  Notice  that  {\bf (b)$_k$}  implies  
\bel{sigma1}
W(y(t))\le W(y_k(t_{k}^{j-1}))\le W(y(t_{k-1}))\le\dots\le W(y(t_1))\le W(z)\le\sigma,
\eeq
and, in view of  the definition of $(y_k,v_k)$, also  
\begin{align*}
&W(y_k(t))-W(y_k(t_{k-1})) +\p\int_{t_{k-1}}^t \mathbf{l}(y_k(\tau),v_k(\tau))\,d\tau =  \\
 &[W(y_k(t))-W(y_k(t_k^{j-1}))]+   
 [W(y_k(t_k^{j-1}))-W(y_k(t_k^{j-2}))] +\dots  
  +[W(y_k(t_k^{1}))-W(y_k(t_k^{0}))]\\
&+\p\int_{t_k^{j-1}}^t \mathbf{l}(y_k(\tau),v_k(\tau))\,d\tau+\dots+\p\int_{t_k^{0}}^{t_k^{1}} \mathbf{l}(y_k(\tau),v_k(\tau))\,d\tau\\
& \le-\frac{1}{\varepsilon+1}\left[\gamma(W(y_k(t_k^{j-1})))(t-t_k^{j-1})+\sum_{i=1}^{j-1} \gamma(W(y_k(t_k^{i-1})))(t_k^{i }-t_k^{i-1})\right] .
\end{align*}
 By the monotonicity of $\gamma$ one has   $\gamma(W(y_k(t_k^{j-1})))\le \gamma(W(y_k(t_k^{i-1})))$ for any $i=1,\dots,j-1$, which implies   
  $$
  W(y_k(t))-W(y_k(t_{k-1})) +\p\int_{t_{k-1}}^t \mathbf{l}(y_k(\tau),v_k(\tau))\,d\tau\le  -\frac{1}{\varepsilon+1}  \gamma(W(y_k(t_k^{j-1})))(t-t_{k-1}).
  $$
Hence, recalling the definition of $(y,v)$,  we have 
\begin{align*}
&W(y(t))-W(z)  +\p\int_{0}^t \mathbf{l}(y(\tau),v(\tau))\,d\tau = \\ 
&[W(y(t))-W(y(t_{k-1}))]+[W(y(t_{k-1}))-W(y(t_{k-2}))] +\dots +[W(y(t_1))-W(y(0)]\\  
&+\p\int_{t_{k-1}}^t \mathbf{l}(y_k(\tau),v_k(\tau))\,d\tau+\dots+\p\int_{0}^{t_1} \mathbf{l}(y_k(\tau),v_k(\tau))\,d\tau,
\end{align*}
so,  by using (\ref{sigma1}),  we finally obtain  
\bel{EP}
W(y(t))-W(z)  +\p\int_{0}^t \mathbf{l}(y(\tau),v(\tau))\,d\tau\le    -\frac{1}{\varepsilon+1}  \gamma(W(y_k(t_k^{j-1})))t.
\eeq
This  is the key inequality for proving both claim {\bf (i)} and claim {\bf (ii)} of the theorem.
\vv
As for claim {\bf (i)} --stating that the system is (GAC) to $\T$--,   we have to establish the existence of  a ${\mathcal KL}$ function $\beta$ as in  Definition \ref{(GAC)}. 
  Let   $t$ belong to $[0,\bar t[$. Then $t\in[t_k^{j-1}, t_k^j[$ for some  $k\ge1$ and some $j\in\{0,\dots,{\bar n_k}\}$.  Since $l\ge0$, by \eqref{EP}   we get
\bel{stg}
W(y(\tau))+\frac{ \gamma(W(y(t_k^{j-1}))\,\tau}{\varepsilon+1}\le W(z) \qquad \forall \tau\in[t_k^{j-1},t_k^j].
\eeq
Observe that the function $\tilde \gamma:[0,+\infty[\to[0,+\infty[$  defined  by $\tilde \gamma(r):= \min\{r,\gamma(r)\}$   for all $r\in[0,+\infty[$  is  continuous,  strictly increasing, and $\tilde \gamma(r)>0$ \, $\forall r>0$,  $\tilde \gamma(0)=0$. Then,  taking $\tau=t_k^{j-1}$ in (\ref{stg}), one has 
$$
\tilde \gamma(W(y(t_k^{j-1}))\left[1+\frac{ t_k^{j-1}}{\varepsilon+1}\right]\le W(z),
$$
 so that
$$
 W(y(t))\le  W(y(t_k^{j-1}))\le \tilde \gamma^{-1}\left(\frac{\varepsilon+1}{\varepsilon+1+t_k^{j-1}}\,W(z)\right).
$$
By Proposition \ref{claim2bis} it is not restrictive to assume  $diam(\pi_k)\le1/2$. Therefore  we get
$$
W(y(t))\le \tilde \gamma^{-1}\left(\frac{2( \varepsilon+1)}{\varepsilon+1+t }\,W(z)\right).
$$
Proceeding as usual in the construction of the function $\beta$, we  set
\bel{sigma}
\sigma_-(r):=\min\{r\,,\,\min\{{\bf d}(x): \ W(x)\ge r\}\}, \quad
 \sigma^+(r):=\max\{{\bf d}(x): \ W(x)\le r\}.
\eeq
Clearly,  $\sigma_-$, $\sigma^+:[0,+\infty[\to\R$ are continuous, strictly increasing, unbounded  functions such that $\sigma_-(0)=\sigma^+(0)=0$  and
$$
\forall x\in{W^{-1}([0,\sigma])}: \quad \sigma_-(W(x))\le {\bf d}(x)\le\sigma^+(W(x)).
$$
We now define  $\beta:[0,+\infty[\times[0,+\infty[\to[0,+\infty[$ by setting
\bel{defbeta}
\beta(r,t):= \sigma^+\circ\tilde \gamma^{-1}\left(
\sigma_-^{-1}(r)
\,\frac{2(\varepsilon+1)}{\varepsilon+1+t} \right),
\eeq
so, by straightforward calculations, it follows that ($T_y=\bar t$ and)
$$
{\bf d}(y(t ))\le \beta({\bf d}(z), t ) \qquad \forall t\in[0,T_y[.
$$
By the arbitrariness of $\sigma>0$, this concludes the proof of claim {\bf (i)} of the theorem.

\vv
As for  claim {\bf (ii)}, we now observe that  inequality  (\ref{EP})  implies also
$$
 \int_0^{\bar t}\mathbf{l}(y(t),v(t))\,dt =\lim_{k\to+\infty}\int_0^{t_k}\mathbf{l}(y(t),v(t))\,dt \le  \lim_{k\to+\infty}\frac{W(z)-W(y(t_k))}{\p}= \frac{W(z)}{\p}\,, 
$$
from which   (\ref{Wprop}) follows. \,\qed

\section{Control-polynomial systems}\label{balsect}$\,$

in this section and in the next one we will assume the dynamics $\F$  to be a  polynomial of degree $d\geq 0$ in the control variable $u$:
\bel{polymomium-est}
\begin{array}{c}
\displaystyle \dot x=\P(x,u):= f_0(x)+\sum_{i=1}^d\left(\sum_{\alpha\in\N^m, \, \alpha_1+\dots+\alpha_m=i} u_1^{\alpha_1}\cdots u_m^{\alpha_m} f_{\alpha_1,\dots,\alpha_m}(x)\right) , \quad x(0)=z, \\ \, \\
 V(z) :=\displaystyle\inf_{(x,u)\in {\mathcal{ A}}_{f}({z})}  \int_0^{T_x}  l(x(t),u(t))\, dt.  
\end{array}\eeq
 
We assume the  vector fields $f_0,   f_{\alpha_1,\dots,\alpha_m}$   to be continuous and the controls to  range on the set  $$U_r:= [-r, r]^m,$$ 
 for some $r$,  $0< r\leq +\infty$ (if $r=+\infty$ we mean $U_r:=\R^m$). 

On  the one hand such polynomial structure is of obvious interest for applications. For instance,  in the  example of the gyroscope (Section \ref{gyroex}) the dynamics is   quadratic in  the controls, namely the precession and rotation velocities. Also the  impressive behaviour  of the Kapitza pendulum --where a fast oscillation of the pivot turns an unstable (or even  a non-equilibrium) point into a  stable point--  can be explained by saying that the square 
of the pivot velocity  --regarded as a control-- prevails on gravity. Many other mechanical systems, possibly non-holonomic, 
can be thought as control systems with quadratic dependence on  the inputs, see e.g. \cite{BR10}. 

On the other hand, it is natural to try to exploit the control polynomial dependence 
 for  a careful study of the vectogram's convex hull \footnote{In some classical  literature, as well as in some recent papers,  objects akin to  the convex hull of the image of the vector valued function
that maps  $u\in\R^m$ into the (suitably ordered) sequence of all monomials of $u$ up to the degree $d$, are    referred to as {\it spaces of moments}, see e.g. \cite{Akh65,Ego02,Mez04,PT09,Sho50}.}.

\subsection{Near-control-affine systems}

\,\,$\,$

In this subsection we address the task of representing a control-polynomial system -- actually, its convexification -- by means  of a control-affine  dynamics  like
$$ {\F}_{\text{\it aff}}(x,w):= f_0(x)+\sum_{i=1}^d\left(\sum_{\alpha\in\N^m, \, \alpha_1+\dots+\alpha_m=i} w_{\alpha_1,\dots,\alpha_m} f_{\alpha_1,\dots,\alpha_m}(x)\right).$$
Such a representation  in general does not exist, as it is clear when 
 $\F(x,u)= uf_1(x)+u^2 f_{2}(x)$, \, $u\in\R$. However,  an affine  representation is achievable in the case of {\it near-control-affine } systems, where the only non-zero terms are those corresponding to control monomials such that each  component $u_i$ ($i=1,\dots,m$) has an exponent equal  either $0$ or  a fixed odd positive number $K_i$.   To state precisely the main result, let us give some definitions.

\vv
For every $\alpha\in\N^m$, let us set $c(\alpha):=\#\{\alpha_i\ne0; \ i=1,\dots,m\}$.
\begin{definition}[Near-control-affine systems]\label{defbal} We say that the control-polynomial dynamics $f(x,u)$ in {\normalfont(\ref{polymomium-est})} is {\em near-control-affine} if there exist an $m-$tuple  $K=(K_1,\dots,K_m)$ of  positive odd numbers  and a positive integer
 $\dbar \leq m$ such that
$$\F(x,u):=f_0(x)+\sum_{i=1}^{\dbar}\left( \sum_{\alpha\in\N^m: \ c(\alpha)=i, \   \alpha_1\in\{0, K_1\}, \dots, \alpha_m\in\{0,K_m\}} u_1^{\alpha_1}\cdots u_m^{\alpha_m} f_{\alpha_1,\dots,\alpha_m}(x)\right). $$

\end{definition}
\vv

\begin{remark}\label{rmknormd}{\rm    If the near-control-affine system {\normalfont(\ref{polymomium-est})} is of degree $d$, one obviously has   $\dbar\le   d$.
Moreover, when  $\dbar=m$,  the number  $M$ of non-drift terms  of a near-control-affine system $\F$ verifies  $M\le \sum_{k=1}^m \binom{m}{k}=2^m-1$. Indeed  for every  $k\leq m$, the maximum number of non zero terms of the form $u_1^{\alpha_1}\cdots u_m^{\alpha_m} f_{\alpha_1\cdots\alpha_m}$ with $k$ coefficients $\alpha_i\ne0$ is equal to $\binom{m}{k}$.
}
\end{remark}

\vskip 0.3truecm

For every $r\in]0,+\infty[$ we set 
\begin{equation}\label{rbaldef}
 \bar r:=\frac{1}{M}\min\{r^{j K_i}\mid i=1,\dots,m; ~j=1,\dbar\}
\end{equation}
and
$$
\Wbalr:=[-\bar r,\bar r]^M.
$$ 
In addition, we set 
$$
 \bar U_{+\infty} :=\RR^M.$$ 
\vskip0.3truecm
  Theorem \ref{cnear-control-affine}, where we assume  Hypothesis  {\bf A}$_{b}$ below, establishes that near-control-affine systems can be regarded as control-affine systems with independent control variables.
\vskip0.3truecm

\noindent {\bf Hypothesis  A}$_{b}:$

\begin{enumerate}
\item $f$ is near-control-affine;

\item  for every $x\in \Omega\backslash\T$, the map $l(x,\cdot): U_r \to\R$ is bounded;

\item let us  define the (non-negative, continuous) function
$$\ell(x):=\sup_{u\in U} l(x,u).$$ 
The control set for the  minimum  problems  $(\ell,{\F}_{\text{\it aff}},\T)$  coincides with  $\Wbalr$ .

\end{enumerate}
\vskip0.3truecm

\begin{theorem}\label{cnear-control-affine} Let us assume{ Hypothesis} {\bf A}$_{b}$ and let $W$  be a $\p$-MRF for the affine problem  $( \ell,{\F}_{\text{\it aff}},\T)$ for some $\p\ge0$.
 Then the map  $W$ is  a $\p$-MRF for the original (non-affine)  problem $(l,\F,\T)$ as well. In particular, the control system in {\rm \eqref{polymomium-est}}   is \emph{GAC}  to $\T$ and, if  $\p>0$,
$$V(z)\leq \frac{W(z)}{\p} \qquad\forall z\in \Omega\backslash \T.$$
\end{theorem}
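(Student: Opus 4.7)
The task is to promote the MRF inequality from the affine problem $(\ell,{\F}_{\text{\it aff}},\T)$ to the original polynomial problem $(l,f,\T)$; once $W$ is known to be a $\p$-MRF for the latter, Theorem \ref{stimaenergiaintro} immediately yields both \emph{GAC} and the bound $V(z)\le W(z)/\p$ when $\p>0$. Concretely, for each $x\in\Omega\setminus\T$ and every $p\in D^*W(x)$ I must show $H_{l,f}(x,\p,p)<0$, knowing by hypothesis that $H_{\ell,{\F}_{\text{\it aff}}}(x,\p,p)<0$.

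The whole argument reduces to the finite convex-hull inclusion
\begin{equation}\label{inclusion-nca}
\{{\F}_{\text{\it aff}}(x,w):w\in\Wbalr\}\subseteq \mathrm{co}\bigl\{f(x,u):u\in U_r\bigr\}.
\end{equation}
Combining \eqref{inclusion-nca} with the pointwise bound $l(x,u)\le\ell(x)$ from part (2) of Hypothesis $\mathbf{A}_b$ yields the Hamiltonian comparison: pick $w^\ast\in\Wbalr$ with $\langle p,{\F}_{\text{\it aff}}(x,w^\ast)\rangle+\p\ell(x)<0$ and decompose ${\F}_{\text{\it aff}}(x,w^\ast)=\sum_k\lambda_k f(x,u^{(k)})$ convexly with $u^{(k)}\in U_r$; by linearity some index $k^\ast$ obeys $\langle p,f(x,u^{(k^\ast)})\rangle\le\langle p,{\F}_{\text{\it aff}}(x,w^\ast)\rangle$, and since $\p\,l(x,u^{(k^\ast)})\le\p\ell(x)$ one gets $\langle p,f(x,u^{(k^\ast)})\rangle+\p\,l(x,u^{(k^\ast)})<0$, whence $H_{l,f}(x,\p,p)<0$.

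The heart of the argument is thus \eqref{inclusion-nca}, and here the oddness of each $K_i$ is decisive. For every non-drift multi-index $\alpha$ from Definition \ref{defbal}, denote its support by $S_\alpha\subseteq\{1,\dots,m\}$ and set $k=c(\alpha)=|S_\alpha|$. For $a\in[0,r]$ and $\epsilon\in\{-1,+1\}^{S_\alpha}$, consider the control $u^{\alpha,a,\epsilon}\in U_r$ equal to $a\epsilon_j$ on $S_\alpha$ and $0$ elsewhere. Oddness of the $K_j$ gives $(u^{\alpha,a,\epsilon})^\beta=a^{|\beta|}\prod_{j\in\mathrm{supp}(\beta)}\epsilon_j$ when $\mathrm{supp}(\beta)\subseteq S_\alpha$, and $0$ otherwise. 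Averaging uniformly over the sheet $\{\epsilon:\prod_{j\in S_\alpha}\epsilon_j=\sigma\}$ for a chosen sign $\sigma\in\{-1,+1\}$, orthogonality of Walsh characters on $\{\pm1\}^{S_\alpha}$ annihilates every $\beta$ with $\emptyset\neq\mathrm{supp}(\beta)\subsetneq S_\alpha$, leaving only $\sigma a^{|\alpha|}f_\alpha(x)$ beyond the drift $f_0(x)$. Hence for every $b_\alpha\in[-r^{|\alpha|},r^{|\alpha|}]$ one has $f_0(x)+b_\alpha f_\alpha(x)\in\mathrm{co}\{f(x,u):u\in U_r\}$. A second convex combination with uniform weights $1/M$ over the $M$ non-drift indices then yields $f_0(x)+\sum_\alpha(b_\alpha/M)f_\alpha(x)\in\mathrm{co}\{f(x,u):u\in U_r\}$; setting $w_\alpha:=b_\alpha/M$, every $w\in\RR^M$ with $|w_\alpha|\le r^{|\alpha|}/M$ is reached, and a direct check from \eqref{rbaldef} shows $\bar r\le r^{|\alpha|}/M$ for each non-drift $\alpha$, so the full cube $\Wbalr$ is covered and \eqref{inclusion-nca} is proven.

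The principal obstacle is this convex-hull inclusion: the oddness of every $K_i$ is what makes each map $u_j\mapsto u_j^{K_j}$ a sign-preserving bijection of $[-r,r]$, and this is precisely the feature that powers the Walsh-character decoupling on each support $S_\alpha$. Once \eqref{inclusion-nca} is in hand, the rest is formal: $W$ becomes a $\p$-MRF for $(l,f,\T)$, Hypothesis $\mathbf{A}$ is readily verified from the polynomial regularity of $f$ together with the boundedness condition $l(x,\cdot)\le\ell(x)$ (e.g.\ via Remark \ref{A'}), and Theorem \ref{stimaenergiaintro} delivers both the GAC property and, when $\p>0$, the estimate $V(z)\le W(z)/\p$ for every $z\in\Omega\setminus\T$.
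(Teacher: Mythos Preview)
Your proof is correct and follows essentially the same route as the paper's. The paper isolates the convex-hull inclusion \eqref{inclusion-nca} as Lemma~\ref{lnear-control-affine} and proves it in the Appendix via a combinatorial identity (their Claim~A) that is exactly your Walsh-character orthogonality on $\{\pm1\}^{S_\alpha}$; the subsequent Hamiltonian comparison using $l(x,u)\le\ell(x)$ and the appeal to Theorem~\ref{stimaenergiaintro} are identical, with the paper leaving the verification of Hypothesis~\textbf{A} implicit where you make it explicit.
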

\begin{proof}
Let $x\in \Omega\setminus\T$. By assumption one has
$$
\inf_{w\in \Wbalr}  	\Big\{\Big\langle p\,,\,   {\F}_{\text{\it aff}}(x,w) \Big\rangle\Big\}+\p\ell(x) < 0 \qquad \text{for all }p\in D^*W(x).
$$
By Lemma \ref{lnear-control-affine} below, ${\F}_{\text{\it aff}}(x,\Wbalr)\subseteq co \F(x,\Ubalr)$, which implies
\bel{fco}
\inf_{u\in \Ubalr}  \Big\{\Big\langle p\,,\,  \F(x,u) \Big\rangle\Big\}+\p\ell(x) < 0  \qquad \text{for all }p\in D^*W(x).
\eeq
  This concludes the proof, since \eqref{fco}   yields
$$
\inf_{u\in \Ubalr}  \Big\{\Big\langle p\,,\,\F(x,u) \Big\rangle+\p l(x,u)\Big\} < 0  \qquad \text{for all }p\in D^*W(x).
$$
\end{proof}

\begin{lemma}\label{lnear-control-affine}
 For every $r\in [0,+\infty]$
\begin{equation}\label{comb2}
{\F}_{\text{\it aff}}(x,\Wbalr)\subset co~\F(x,\Ubalr)\quad \forall x\in\Omega\setminus\T.
\end{equation}
\end{lemma}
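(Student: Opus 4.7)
The plan is to exhibit, for each $w \in \bar U_r$, an explicit convex combination of values $f(x, u)$ with $u \in U_r$ that equals $f_{\text{aff}}(x, w)$. Because the system is near-control-affine, every non-drift monomial in $f$ is indexed by its support $S = \{j : \alpha_j \neq 0\} \subseteq \{1, \dots, m\}$ with $1 \leq |S| \leq \bar d$; let me denote by $g_S(x)$ the corresponding vector field $f_{\alpha_1, \dots, \alpha_m}(x)$ and let $\mathcal{S}$ be the collection of such $S$, so that $|\mathcal{S}| = M$. Since each $K_i$ is odd, $u_i \mapsto u_i^{K_i}$ is a sign-preserving bijection of $[-r, r]$ onto $[-r^{K_i}, r^{K_i}]$. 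Hence
\begin{equation*}
f(x, u) = f_0(x) + \sum_{S \in \mathcal{S}} \Bigl(\prod_{i \in S} u_i^{K_i}\Bigr) g_S(x), \qquad f_{\text{aff}}(x, w) = f_0(x) + \sum_{S \in \mathcal{S}} w_S\, g_S(x).
\end{equation*}

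The key step will be a sign-averaging construction, performed separately for each $S \in \mathcal{S}$. I would fix a magnitude $b_S \in [0, r]$ and, for every $\epsilon \in \{-1, +1\}^S$, consider the control $u^{(S, \epsilon)} \in U_r$ given by $u_i^{(S, \epsilon)} := \epsilon_i b_S$ when $i \in S$ and $u_i^{(S, \epsilon)} := 0$ otherwise. Monomials whose support is not contained in $S$ then vanish at $u^{(S, \epsilon)}$, and for $T \subseteq S$ the $T$-th monomial evaluates to $\bigl(\prod_{i \in T} \epsilon_i\bigr)\, b_S^{\sum_{i \in T} K_i}$. Weighting with $\lambda_\epsilon^{(S)} := 2^{-|S|}\bigl(1 + \sigma_S \prod_{i \in S} \epsilon_i\bigr)$, where $\sigma_S := \operatorname{sgn}(w_S)$, the character-orthogonality identity $\sum_\epsilon \prod_{i \in A} \epsilon_i = 2^{|S|}\mathbf{1}_{A = \emptyset}$ on $\{-1, +1\}^S$ yields $\sum_\epsilon \lambda_\epsilon^{(S)} \prod_{i \in T} \epsilon_i = \mathbf{1}_{T = \emptyset} + \sigma_S \mathbf{1}_{T = S}$ for every $T \subseteq S$, so that
\begin{equation*}
\sum_\epsilon \lambda_\epsilon^{(S)}\, f\bigl(x, u^{(S, \epsilon)}\bigr) = f_0(x) + \sigma_S\, b_S^{\sum_{i \in S} K_i}\, g_S(x).
\end{equation*}

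Choosing $b_S := (M|w_S|)^{1/\sum_{i \in S} K_i}$ makes the right-hand side equal $f_0(x) + M w_S g_S(x)$, and then the uniform mixture $\frac{1}{M} \sum_{S \in \mathcal{S}} \sum_\epsilon \lambda_\epsilon^{(S)}\, f\bigl(x, u^{(S, \epsilon)}\bigr)$ collapses to $f_0(x) + \sum_S w_S g_S(x) = f_{\text{aff}}(x, w)$. The total weight is $\frac{1}{M} \cdot M = 1$ and every coefficient is non-negative, so $f_{\text{aff}}(x, w)$ lies in $co\, f(x, U_r)$.

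The one genuine verification is that each $b_S$ indeed belongs to $[0, r]$, which amounts to $M|w_S| \leq r^{\sum_{i \in S} K_i}$ for every $S \in \mathcal{S}$. Using $|w_S| \leq \bar r$, this reduces to $M \bar r = \min\{r^{j K_i} : 1 \leq i \leq m,\, 1 \leq j \leq \bar d\} \leq r^{\sum_{i \in S} K_i}$. A short case split settles this uniformly: if $r \geq 1$, the minimum is $r^{\min_i K_i}$, bounded by $r^{K_{i_0}} \leq r^{\sum_{i \in S} K_i}$ for any fixed $i_0 \in S$; if $r < 1$, the minimum is $r^{\bar d \max_i K_i}$, which is bounded by $r^{\sum_{i \in S} K_i}$ since $\sum_{i \in S} K_i \leq |S| \max_i K_i \leq \bar d \max_i K_i$. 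The degenerate cases $r = 0$ and $r = +\infty$ are immediate. I expect this bookkeeping with $\bar r$ to be the only non-obvious step; the rest of the argument is driven entirely by the sign-averaging identity.
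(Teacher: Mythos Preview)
Your proof is correct and follows essentially the same sign-averaging strategy as the paper: isolate each support $S$ by averaging $f(x,\cdot)$ over controls supported on $S$ with signs $\epsilon\in\{-1,1\}^S$ so that all cross-monomials cancel by character orthogonality, then take the uniform $1/M$ mixture over $S\in\mathcal S$. Your presentation is in fact a bit more streamlined than the paper's, since you work directly with general odd exponents $K_i$ (the paper first reduces to $K=(1,\dots,1)$ via the substitution $u_i\mapsto u_i^{1/K_i}$) and you write the averaging weights in the closed form $\lambda^{(S)}_\epsilon=2^{-|S|}(1+\sigma_S\prod_{i\in S}\epsilon_i)$, which is exactly the uniform measure on the fiber $\{\epsilon:\prod_i\epsilon_i=\sigma_S\}$ that the paper uses implicitly.
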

This  result will be proved in Appendix \ref{proofsec}. 

\begin{remark}\label{rmkbal}{
Besides implying Theorem \ref{cnear-control-affine}, Lemma \ref{lnear-control-affine} gives access to classical results on control-affine  systems for the study of  local controllability  of near-control-affine systems. For instance,
consider the driftless,   near-control-affine system (with $d=8$, $K=(1,3,5)$ and $\dbar =2$)
 \bel{balsyst}\dot x=\F(x,u)=u_1u_2^3 f_{1,3,0}(x) +u_1u_3^5 f_{1,0,5}(x) + u_2^3u_3^5 f_{0,3,5}(x),\eeq
 with $x=(x_1,x_2,x_3,x_4)\in \RR^4$, $u=(u_1,u_2,u_3)\in \RR^3$ and
 \begin{equation*}
  f_{1,3,0}(x) =(1,0,x_2,0)^{tr};\quad f_{1,0,5}(x)=(0,1,-x_1,0)^{tr};\quad f_{0,3,5}(x)=(0,0,0,1)^{tr}.
\end{equation*}
Notice that $\{(u_1u_2^3,u_1u_3^5,u_2^3u_3^5)\mid (u_1,u_2,u_3)\in\RR^3\}\subset  \RR^3$ and, for instance,  $$(0,1,1)\notin \{(u_1u_2^3,u_1u_3^5,u_2^3u_3^5)\mid (u_1,u_2,u_3)\in\RR^3\},$$  so $\F$ cannot be parameterized as  control-linear vector field with controls in $\R^3$.
However, by Lemma \ref{lnear-control-affine} the control-linear vector field
 $${\F}_{\text{\it aff}}(x,w)= w_{1,3,0} f_{1,3,0}(x) +w_{1,0,5} f_{1,0,5}(x) +w_{0,3,5} f_{0,3,5}(x)  \quad (w_{1,3,0},w_{1,0,5},w_{0,3,5})\in\RR^3$$
satisfies
$${\F}_{\text{\it aff}}(x,\Wbalr)\subset co(\F(x,\Ubalr)) \quad \forall x\in\RR^4;~\forall r>0.$$ For example, we have that  $f_{1,0,5}(x) + f_{0,3,5}(x) \notin \F(x,\Ubalr)$,  while
$$
  f_{1,0,5}(x) + f_{0,3,5}(x) = \frac 1 2 \F(x,(1,0,2^{1/5})) + \frac 1 2 \F(x,(0,1,2^{1/5})).$$}
\end{remark}

\begin{remark}{\rm
 Let us see a simple utilization of the affine representability of ${\F}_{\text{\it aff}}$ for system \eqref{balsyst}.  Observe that the latter verifies the so-called Lie algebra rank condition, $$Lie_x\{f_{1,3,0},f_{1,0,5},f_{0,3,5}\}=\RR^4 \qquad \forall x\in\RR^4.$$
Indeed the Lie bracket $[f_{1,3,0},f_{1,0,5}]$ coincides with the vector field constantly equal to $(0,0,2,0)^t$, so that $$span\{ f_{1,3,0},f_{1,0,5},f_{0,3,5},[f_{1,3,0},f_{1,0,5}]\} = \R^4$$ at every point. 
Therefore, by Chow-Rashevsky's Theorem the system  $\dot x= {\F}_{\text{\it aff}}(x,w)$   turns out to be small time locally  controllable. Now, by Lemma \ref{lnear-control-affine}
$${\F}_{\text{\it aff}}(x,\Wbalr)\subset co(\F(x,\Ubalr))\quad \forall x\in\Omega\setminus\T.$$ Consequently, by a standard  relaxation argument, we can deduce  that the system
$\dot x= \F(x,u)$ is small time locally controllable as well.
}
\end{remark}

\vskip1truecm
\subsection{Maximal degree weak subsystems}\label{secmaximal}
$\,$
\vskip0.3truecm
In this subsection and the next one,  we assume $r=+\infty$, i.e. $U_r=\RR^m$  and  look for    {\it weak  subsystems}, namely  set-valued selections of the convex-valued multifunction  $x\mapsto co\,\,\F(x,\R^m). $ 

We begin with a class of weak subsystems which we call {\it maximal degree} subsystems.
Theorem \ref{maximalth} below extends in several directions a result contained  in \cite{BR10} and valid for the case $d=2$.
It   states that in order to test if a function $W$ is a $\p$-MRF function for problem \eqref{polymomium-est},  it is sufficient to test  $W$ on the (simpler)   {\it maximal degree}  problem
\bel{max}
\begin{array}{l}
\dot x=\E(x,u), \qquad x(0)=z, \\ \, \\
\displaystyle\inf_{(x,u)\in{\mathcal A}_{\E}(z) }\displaystyle \int_0^{T_{x}} l(x(t),u(t)) dt, 
\end{array}
\eeq
where  the \emph{maximal degree}  control-polynomial vector field $\E$ is defined by
$$\E(x,u):= f_0(x)+\sum_{\alpha\in\N^m, \, \alpha_1+\dots+\alpha_m=d} u_1^{\alpha_1}\cdots u_m^{\alpha_m} f_{\alpha_1,\dots,\alpha_m}(x).$$

We shall  assume the following additional  hypothesis on the running cost:
 \vskip0.5truecm
\noindent
{\bf Hypothesis  A}$_{max}${\it : There exist non negative continuous   functions  
 $M_0=M_0(x)$, $M_1= M_1(x,u)$ such that
\begin{equation}\label{lmax}
l(x,u) =M_0(x) + M_1(x,u),
\end{equation}
with   $M_1$ verifying
$$
M_1(x,0)=0 ,\qquad M_1(x,ku)\leq k^d M_1(x,u) \qquad \forall k\geq 1,~x\in\Omega\setminus\T,~u\in\RR^m.$$}
Notice that running costs   of the form
$$
l(x,u) = l_0(x) + l_1(x)|u| +\dots+ l_d(x)|u|^d,
$$
where the maps $l_i(\cdot)$ are continuous and non-negative, verify Hypothesis
{\bf A$_{max}$}.

\begin{theorem}\label{maximalth} Let us assume{ Hypothesis} {\bf A}$_{max}$, and let $W$ be a $\p$-MRF for the maximal degree  problem $(l,\F^{max}_{\lambda},\T)$,
for some $\p\ge0$. Then the map  $W$ is  a  $\p$-MRF  for the original problem $(l,\F,\T)$.
In particular, the control system in {\rm\eqref{polymomium-est}} is {\rm GAC} to $\T$ and, if  $\p>0$,
$$V(z)  \leq  \frac{W(z)}{\p} \qquad\forall z\in \Omega\backslash \T.$$
\end{theorem}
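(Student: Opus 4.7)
The strategy is to reduce everything to checking the Hamiltonian inequality. Since $W$ is assumed to be a $\p$-MRF for $(l,\E,\T)$, it is already continuous, locally semiconcave, positive definite, proper, and satisfies the required behavior on $\partial\Omega$---these properties involve only $W$, $\Omega$, and $\T$, not the dynamics. So everything reduces to showing that
\bel{plan-target}
H_{l,\F}(x,\p,p)<0\qquad \forall x\in\Omega\setminus\T,\ \forall p\in D^*W(x),
\eeq
given the analogous inequality for $\E$.

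Fix $x\in \Omega\setminus\T$ and $p\in D^*W(x)$. By hypothesis there exists $\bar u\in\R^m$ with $\langle p,\E(x,\bar u)\rangle+\p\, l(x,\bar u)<0$. Decompose the polynomial dynamics into its homogeneous pieces by writing $\F(x,u)=f_0(x)+\sum_{i=1}^{d}P_i(x,u)$, where $P_i(x,u):=\sum_{|\alpha|=i}u_1^{\alpha_1}\cdots u_m^{\alpha_m}f_{\alpha_1,\dots,\alpha_m}(x)$; in particular $P_i(x,ku)=k^iP_i(x,u)$ and $\E(x,u)=f_0(x)+P_d(x,u)$. Split $l=M_0+M_1$ as in Hypothesis \textbf{A}$_{max}$, and set
\[
A:=\langle p,f_0(x)\rangle+\p M_0(x),\qquad B:=\langle p,P_d(x,\bar u)\rangle+\p M_1(x,\bar u).
\]
The assumption on $\bar u$ then reads $A+B<0$.

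I now split into two cases according to the sign of $B$.

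\emph{Case 1: $B\ge 0$.} Then $A<-B\le 0$, so choosing $u=0$ in the original system and using $M_1(x,0)=0$ yields $\langle p,\F(x,0)\rangle+\p\, l(x,0)=A<0$, which gives \eqref{plan-target}.

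\emph{Case 2: $B<0$.} For $\lambda\ge 1$ plug in $u=\lambda\bar u$. The growth hypothesis on $M_1$ gives $l(x,\lambda\bar u)\le M_0(x)+\lambda^d M_1(x,\bar u)$, hence
\[
\langle p,\F(x,\lambda\bar u)\rangle+\p\,l(x,\lambda\bar u)\le A+\sum_{i=1}^{d-1}\lambda^i\langle p,P_i(x,\bar u)\rangle+\lambda^d B.
\]
The right-hand side is a polynomial in $\lambda$ whose leading coefficient $B$ is strictly negative, so it tends to $-\infty$ as $\lambda\to+\infty$. Picking $\lambda$ sufficiently large yields \eqref{plan-target} in this case as well.

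The only potential friction point is making sure the decomposition $l=M_0+M_1$ is handled cleanly and that the limiting gradient inequality is interpreted correctly (it is the strict inequality at every $p\in D^*W(x)$, including the possibility of $H_{l,\E}=-\infty$, which only makes the existence of a suitable $\bar u$ easier). Once \eqref{plan-target} is established, the $\p$-MRF property of $W$ for $(l,\F,\T)$ is proved, and the conclusions about global asymptotic controllability and the bound $V(z)\le W(z)/\p$ follow directly from Theorem \ref{stimaenergiaintro}.
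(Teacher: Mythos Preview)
Your proof is correct and follows essentially the same route as the paper's: both use $u=0$ to isolate the drift contribution $A=\langle p,f_0(x)\rangle+\p M_0(x)$ and then scale $u=\lambda\bar u$ with $\lambda\to+\infty$ so that the degree-$d$ term dominates. The paper packages this as a contradiction argument (assuming the Hamiltonian inequality fails for all $u$ forces $A\ge0$, hence $B<0$, and the scaling then yields the contradiction), whereas you organize it as a direct proof with an explicit case split on the sign of $B$; the mathematical content is the same.
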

 
\begin{proof} 
Assume by contradiction that there  exist   $x\in\Omega\backslash\T$ and $p\in D^*W(x)$ such that
\bel{MRFmax1} \p l(x,u) + \langle p, f_0(x)\rangle +\sum_{i=1}^d\left(\sum_{\alpha\in\N^m, \, \alpha_1+\dots+\alpha_m=i}  \langle p, u_1^{\alpha_1}\cdots u_m^{\alpha_m} f_{\alpha_1,\dots,\alpha_m}(x)\rangle \right)\ge 0
\eeq
for all $u\in \R^m$.   By taking $u=0$  we obtain
\bel{MRFmax3}
\p M_0(x) +  \langle p, f_0(x)\rangle \ge 0.
\eeq
By assumption, there exists $\tilde u\in\R^m$ and $\eta >0$  such that
\bel{MRFmax2}
\p\, l(x,\tilde u) + \langle p~,~f_0(x)\rangle + \sum_{\alpha\in\N^m, \, \alpha_1+\dots+\alpha_m=d}  \langle p, \tilde u_1^{\alpha_1}\cdots \tilde u_m^{\alpha_m} f_{\alpha_1,\dots,\alpha_m}(x)\rangle = -\eta .
\eeq
Moreover, \eqref{MRFmax3}-\eqref{MRFmax2} imply
\bel{MRFmax3bis}
 \p k^dM_1(x,\tilde u)+ k^d \sum_{\alpha\in\N^m, \, \alpha_1+\dots+\alpha_m=d}  \langle p, \tilde u_1^{\alpha_1}\cdots \tilde u_m^{\alpha_m} f_{\alpha_1,\dots,\alpha_m}(x)\rangle \leq  -\eta k^d
\eeq
for any $k\geq 0$.
Hence, for every $k\geq 1$
\begin{align*}
  \p l(x,k\tilde u)  +    \langle p~,~ & f_0(x)\rangle +   k \sum_{\alpha\in\N^m, \, \alpha_1+\dots+\alpha_m=1}  \langle p, \tilde u_1^{\alpha_1}\cdots \tilde u_m^{\alpha_m} f_{\alpha_1,\dots,\alpha_m}(x)\rangle+\\
  & \dots+k^{d-1}  \sum_{\alpha\in\N^m, \, \alpha_1+\dots+\alpha_m=d-1}  \langle p, \tilde u_1^{\alpha_1}\cdots \tilde u_m^{\alpha_m} f_{\alpha_1,\dots,\alpha_m}(x)\rangle\\
                        &    + k^d \sum_{\alpha\in\N^m, \, \alpha_1+\dots+\alpha_m=d}  \langle p, \tilde u_1^{\alpha_1}\cdots \tilde u_m^{\alpha_m} f_{\alpha_1,\dots,\alpha_m}(x)\rangle\leq \\
\p k^d M_1(x,\tilde u)   + \p  M_0(x) &+  \langle p~,~ f_0(x)\rangle + k \sum_{\alpha\in\N^m, \, \alpha_1+\dots+\alpha_m=1}  \langle p, \tilde u_1^{\alpha_1}\cdots \tilde u_m^{\alpha_m} f_{\alpha_1,\dots,\alpha_m}(x)\rangle+\\
  & \dots+k^{d-1}  \sum_{\alpha\in\N^m, \, \alpha_1+\dots+\alpha_m=d-1}  \langle p, \tilde u_1^{\alpha_1}\cdots \tilde u_m^{\alpha_m} f_{\alpha_1,\dots,\alpha_m}(x)\rangle\\
                        &    + k^d \sum_{\alpha\in\N^m, \, \alpha_1+\dots+\alpha_m=d}  \langle p, \tilde u_1^{\alpha_1}\cdots \tilde u_m^{\alpha_m} f_{\alpha_1,\dots,\alpha_m}(x)\rangle\leq\\
 \p  M_0(x) +  \langle p~,~ f_0(x)\rangle&+ k \sum_{\alpha\in\N^m, \, \alpha_1+\dots+\alpha_m=1}  \langle p, \tilde u_1^{\alpha_1}\cdots \tilde u_m^{\alpha_m} f_{\alpha_1,\dots,\alpha_m}(x)\rangle+\\
  & \dots+k^{d-1}  \sum_{\alpha\in\N^m, \, \alpha_1+\dots+\alpha_m=d-1}  \langle p, \tilde u_1^{\alpha_1}\cdots \tilde u_m^{\alpha_m} f_{\alpha_1,\dots,\alpha_m}(x)\rangle                         
-\eta k^d .
\end{align*}
If $k$ is  sufficiently large the last term is negative, which contradicts
\eqref{MRFmax1}.
\end{proof}

\begin{remark}{\rm
The thesis of  Theorem \ref{maximalth} cannot be extended to  the case of  bounded control sets.
  For instance, if $d=3$, $n=m=1$, $U=[-1,1]$, $\T=\{0\}$, $l\equiv0$, and $\F(x,u)=(u^2+u^3)x$, one has $\dot x=\F(x,u)\geq 0$
  for $x\geq 0$, so the system is
 not GAC to $\T$ and no control Lyapunov function \footnote{When $l=0$ the notion of $\p$-MRF coincides with that of control Lyapunov function.} exists. However,  $W(x)=x^2$ is a control Lyapunov  function for $(l,\F^{max})$, so   that  the system $\dot x=\F^{max}(x,u)$ is GAC to $\T$.
Nevertheless,  some  symmetry arguments may allow the extension of Theorem \ref{maximalth} to  some special classes of polynomial control systems with {\it bounded}  control sets.
This might be the case when   $d=2$, $U$ is a (compact) symmetric control set (i.e. $u\in U$ implies $-u\in U$) and, for all $x\in\Omega\setminus\T$,   $l(x,\cdot)$ is an even function.
 For  example, consider the system
$$
 \dot x =f(x,u),  \quad  x(0)=z, \qquad u\in U := [-1,1]^2$$
 where \, $$f(x,u):= f_0(x) +u_1 f_{1,0}(x)+u_2 f_{0,1}(x)+ u_1^2 f_{2,0} (x)+ u^2_2 f_{0,2} (x) +  u_1u_2 f_{1,1} (x),$$   
together with the minimum problem
$$
\displaystyle \inf_{(x,u)\in{\mathcal A}_f(z)}\int_0^{T_{x}} (|u| + x^2u^2)dt.
$$
Notice that
$$(l,\F^{max})(x,u)=\frac{1}{2}(l,\F)(x,u)+\frac{1}{2}(l,\F)(x,-u)\in co(l,\F)(x,U)\qquad \forall x\in\Omega\setminus\T,~ u\in U.$$
Therefore, for every $(x,(p_0,p))\in (\Omega\setminus\T)\times\RR^{1+n}$, one has
$$H_{l,\F^{max}}(x,p_0,p)<0\quad \Rightarrow \quad H_{l,\F}(x,p_0,p)<0.$$
Consequently a map  $W$ is  {\rm $\p$-MRF}  for  $(l,\F^{max},\T)$ for some $\p\ge0$  if and only if $W$ is a  {\rm $\p$-MRF} for $(l,\F,\T)$.
Then Theorem \ref{thselectionintro} applies and, consequently,  Theorem \ref{maximalth} can be extended to this case.}
\end{remark}

\vskip0.3cm
\subsection{Diagonal weak subsystems}$\,$\label{secdiagonal}
Another class of weak subsystems is given by the {\it  diagonal  subsystems} described below. We still assume $U=\R^m$. 

Let us use $\mathbf e_1,\cdots,\mathbf e_m$ to denote  the basis of $\R^m$ and  let us set $\mathbf e_0:=0$.
 \begin{definition}\label{diagdef} For every $\lambda$ belonging to the  simplex $\Lambda:=\{\lambda\in\RR^m\mid \sum_{i=1}^m \lambda_i\leq 1;~\lambda_i\geq 0\}$,
\bel{diaginconc}
\F^{diag}_{\lambda}(x,u)
 :=
\sum_{i=0}^m \lambda_i
\F(x,{\lambda_i}^{-\frac 1 d}{u_i}\mathbf e_i) ,
\eeq
where  $\lambda_0:=1-\sum_{i=1}^m \lambda_i$,  
will be called the  {\em $\lambda$-diagonal}  control vector field corresponding to $\F$ and $\lambda$.
\end{definition}

For instance, setting  $f_{\alpha_1,\dots,\alpha_m}:=f_\alpha$ for every $\alpha\in\N^m$,  when $d=2$,  $d=3$  one has
$$
\F^{diag}_{\lambda}(x,u)= f_0(x)+\sum_{i=1}^m \lambda_i^{\frac 1 2}u_i  f_{\mathbf e_i}(x)+\sum_{i=1}^m u_i^2  f_{2\mathbf e_i}(x).
$$
and
$$
\F^{diag}_{\lambda}(x,u)= f_0(x)+\sum_{i=1}^m \lambda_i^{\frac 2 3}u_i  f_{\mathbf e_i}(x)+\sum_{i=1}^m \lambda_i^{\frac 1 3} u_i^2 f_{2\mathbf  e_i}(x)+\sum_{i=1}^m u_i^3  f_{3\mathbf e_i}(x),
$$
respectively.
\begin{remark}\label{remdiag}{\rm 
 Since $\sum_{i=0}^m \lambda_i=1$, this implies that
\bel{Fdiag}
\F^{diag}_\lambda(x,\R^m) \subseteq co~\F(x,\R^m).
\eeq

}\end{remark}
 \vskip0.2truecm
We shall  assume the following   hypothesis on the running cost:
 \vskip0.2truecm
\noindent
{\bf Hypothesis  A}$_{diag}${: There exists a real number $\M\geq0$ such that,  for every $\lambda\in\Lambda$ verifying  $\lambda_i>0$, $i=1,\dots,m$, one has }

\bel{HIP}
l(x,0) + \sum_{i=1}^m \lambda_i l(x,\frac{u_i}{\sqrt[d]{\lambda_i}}\mathbf e_i)\leq \M \,l(x,u)
\qquad \forall u\in\R^m. \eeq
\vskip0.5truecm

\begin{remark}{\rm Notice that for every $q\geq 1$, the particular running cost 
\bel{pollagrang} l(x,u) := l_0(x) + l_{1}(x) |u|+\cdots l_q(x)|u|^q \eeq
does verify Hypothesis {\bf A}$_{diag}$    (with $\M=\sqrt{m}$)\footnote{This is due to the elementary inequalities  
 $$|u_1|+\cdots+|u_m|\leq \sqrt{m} |u|\qquad (|u_1|^q+\cdots+|u_m|^q)^\frac{1}{q}\leq |u|\qquad \forall q>1
.$$}. As a model, simple  case, one could consider $ l(x,u) = |u|^q$, $q\geq d$, so that  the functional to be minimized would be  nothing but  the $q$-th power of the  $L^q$-norm of $u$ .}\end{remark}

\begin{theorem}\label{diagonalth} Assume that  Hypothesis {\bf A}$_{diag}$ holds true for a suitable $M_0\geq 0$,
 and let $W$ be a $\p$-MRF for the $\lambda$-diagonal problem $(l,\F^{diag}_{\lambda},\T)$,  for some $\p\ge0$.
 Then the map  $W$ is  a $\bar\p$-MRF for the original problem $(l,\F,\T)$, where   $\bar\p:= \frac{\p}{\M}$ if $\M>0$, while, if $\M=0$,   $\bar p_0$ is allowed to be any positive real number.
 \\ In particular, the control system  in {\rm\eqref{polymomium-est}} is {\rm GAC} to $\T$ and, if   $\p>0$,
\begin{equation}\label{Wdiag}
V(z)  \leq \frac{\M W(z)}{\p}\qquad\forall z\in \Omega\backslash \T.
\end{equation}
\end{theorem}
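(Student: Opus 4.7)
The plan is to prove that $W$ is automatically a $\bar p_0$--MRF for the original polynomial problem $(l,\F,\T)$; once this is achieved, Theorem~\ref{stimaenergiaintro} delivers both GAC and the bound $V(z)\le W(z)/\bar p_0 = \M W(z)/\p$ in one shot, since the pair $(l,\F)$ with $\F$ polynomial in $u$ and $l$ as in Hypothesis~$\mathbf A_{diag}$ trivially satisfies Hypothesis~$\mathbf A$. Note that local semiconcavity, properness, positive definiteness and the boundary behaviour required of $W$ are properties of $W$ alone, hence inherited verbatim from its being a $\p$--MRF for the diagonal problem; the only substantive thing to check is the Hamiltonian inequality $H_{l,\F}(x,\bar p_0,p)<0$ for every $x\in\Omega\setminus\T$ and every $p\in D^*W(x)$.

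To verify this inequality, I would exploit that $\F^{diag}_\lambda(x,u)$ is, by \eqref{diaginconc}, the convex combination $\sum_{i=0}^m \lambda_i \F(x,w_i)$, where $w_0:=0$ and $w_i:=\lambda_i^{-1/d}u_i\mathbf e_i$ for $i=1,\dots,m$. The MRF hypothesis for the diagonal problem then yields some $u\in\R^m$ with
\[
\sum_{i=0}^m \lambda_i \langle p,\F(x,w_i)\rangle + \p\,l(x,u) < 0.
\]
On the other hand, Hypothesis~$\mathbf A_{diag}$, combined with $\lambda_0\le 1$ and $l\ge 0$, gives
\[
\sum_{i=0}^m \lambda_i\,l(x,w_i) \;\le\; l(x,0)+\sum_{i=1}^m \lambda_i\,l(x,w_i) \;\le\; \M\,l(x,u).
\]
When $\M>0$, multiplying this bound by $\bar p_0=\p/\M$ and inserting it into the previous strict inequality yields
\[
\sum_{i=0}^m \lambda_i\bigl[\langle p,\F(x,w_i)\rangle + \bar p_0\,l(x,w_i)\bigr] < 0.
\]
Since $\sum_i \lambda_i=1$ and $\lambda_i\ge 0$, at least one index $i_\ast$ with $\lambda_{i_\ast}>0$ must make the bracketed term strictly negative, and the control $w_{i_\ast}\in\R^m$ produced in this way witnesses $H_{l,\F}(x,\bar p_0,p)<0$. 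The corner case $\M=0$ is even simpler: Hypothesis~$\mathbf A_{diag}$ then forces $l(x,0)=0$ and $l(x,w_i)=0$ for every admissible $w_i$, so the diagonal MRF inequality reduces to $\sum_i \lambda_i\langle p,\F(x,w_i)\rangle<0$, and the same convex-combination argument delivers an index $i_\ast$ with $\langle p,\F(x,w_{i_\ast})\rangle<0$; any $\bar p_0>0$ then works.

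The only step demanding care, and the one where a slip in bookkeeping could derail the argument, is matching the scaling $u_i\mapsto\lambda_i^{-1/d}u_i$ built into the definition of $\F^{diag}_\lambda$ with the precisely identical scaling appearing on the left-hand side of Hypothesis~$\mathbf A_{diag}$; this alignment is exactly what allows the two inequalities above to be combined coefficient by coefficient. Everything else reduces to the elementary observation that a strictly negative convex combination must contain at least one strictly negative summand, followed by an appeal to Theorem~\ref{stimaenergiaintro} applied to the original pair $(l,\F,\T)$ with constant $\bar p_0$.
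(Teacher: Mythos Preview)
Your proposal is correct and follows essentially the same approach as the paper: both arguments hinge on writing $\F^{diag}_\lambda(x,u)=\sum_{i=0}^m\lambda_i\F(x,w_i)$ as a convex combination of values of the original dynamics, then combining the diagonal MRF inequality with Hypothesis~$\mathbf A_{diag}$ to pass back to $H_{l,\F}$. The only cosmetic difference is that the paper bounds $H_{l,\F}(x,\bar p_0,p)$ from above directly---using that an infimum is $\le$ every convex combination of its competitors---whereas you argue contrapositively that a strictly negative convex combination must contain a strictly negative summand, which then witnesses the infimum being negative; these are logically equivalent reorganizations of the same estimate. One small caveat: your remark that Hypothesis~$\mathbf A$ is ``trivially satisfied'' by any $l$ obeying Hypothesis~$\mathbf A_{diag}$ overstates things slightly (the latter is a growth inequality, not a regularity condition), but the paper's own proof glosses over this point in exactly the same way, so your treatment is no less complete than the original.
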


\begin{proof}
  Set $\lambda_0=1-\sum_{i=1}^m \lambda_i$  and $\mathbf e_0 = 0$.
First assume $\M >0$. Then for every $i=0,\dots,m$, every $(x,u)\in (\Omega\backslash \T)\times\R^m$ and every $p\in D^*W(x)$, one has
$$
\lambda_i  H_{l,\F}(x, \frac{\p}{K},p) \leq
 \lambda_i
\left< (\frac{\p}{K},p)~,~( l, \F)(x,\lambda_i^{-\frac 1 d}{u_i}\mathbf e_i)\right>
$$
that, summing up for $i=0, \dots,m$, yields
\bel{stimadiag}
\begin{array}{c}
 \displaystyle{H_{l,\F}(x, \frac{\p}{K},p) \leq
\sum_{i=0}^m \lambda_i
\left<(\frac{\p}{K},p)~,~( l,\F)(x,\lambda_i^{-\frac 1 d}{u_i}\mathbf e_i)\right>\leq}\\\,\\
\displaystyle{\frac{\p}{\M } \M  l(x,u) + \left<p~,~ \F^{diag}_\lambda(x,u)\right >
= \p  l(x,u) + \left<p~,~ \F^{diag}_\lambda(x,u)\right > .}
\end{array}
\eeq
Since by hypothesis
$
\max_{p\in D^*W(x)}H_{l,\F^{diag}_\lambda}(x,\p,p) <0,
$
then there exists
$\tilde u$ such that
$$
\p  l(x,\tilde u) + \left< p,\F^{diag}_\lambda(x,\tilde u)\right ><0
\quad \forall p\in D^*W(x),$$
this, together with by \eqref{stimadiag}, implies
$$
H_{l,\F}(x, \frac{\p}{\M },p) <0 \quad \forall p\in D^*W(x)
$$
which indeed is the thesis of the theorem.
Assume otherwise $\M =0$. Then $l\equiv 0$, consequently $W(z)\equiv 0$ and (\ref{Wdiag}) is trivially verified.
Since $W$ is a $\p$-MRF for $(l,\F_\lambda^{diag},\T)$ and since $l\equiv 0$, for every $x\in\Omega\setminus\T$ there exists $\tilde u\in\RR^m$ such that $\langle p,\F^{diag}_\lambda(x,\tilde u)\rangle<0$ for all $p\in D^*W(x)$.
Consequently, for every $\bar p_0\in\RR$ and for every $p\in D^*W(x)$
\begin{align*}
 H_{l,\F}(x,\bar p_0,p)&=\inf_{u\in\RR^m}\langle p,~\F(x,u)\rangle\leq \sum_{i=0}^m\lambda_i\langle p,~ \F(x,\lambda_i^{-\frac 1 d}u_i\mathbf e_i)\rangle\\
 &=\langle p,~ \F^{diag}_\lambda(x,u)\rangle<0.
 \end{align*}
This gives the thesis in the case $\M =0$ and completes the proof.
\end{proof}

\begin{example}\label{diagexample}
{\rm Let $\T:=\{0\}$, $u\in\R^2$ and let us consider  in $\R^2$  the exit-time problem 
\bel{exdiag}
\begin{array}{l}\dot x = \F(x,u):=x +u_1u_2 (|x|^{-1},1)^{tr}   -u_1^2 (1,0)^{tr} -u_2^2 (0,1)^{tr} + 3u_1^2u_2^2 x\quad  x(0)= z;
\\\,\\
V(z) :=   \displaystyle \inf_{(x,u)\in{\mathcal A}(z)} \int_0^{T_{x}} x^2 |u|^2\,dt.  \end{array}
\eeq

Let $\Phi:[0,+\infty[\to \R$ be  a smooth convex function such that $\Phi(0) = 0$ , $\Phi'(0)\geq 1$. In order  to verify that a function of the form
$$
W(x)=\Phi(|x|^2)
$$
is a $\p$-MRF function for some $\p >0$, let us  begin with observing that {\it the  maximal degree subsystem
$$
\dot x = \E(x,u) =x + 3u_1^2u_2^2
x$$
 does not give any useful information}. Indeed
\begin{align*}
H_{l,\E}(x,\p,\nabla W(x))&=\inf_u \left\{\Big\langle \nabla W(x)~ ,~ \E(x,u)\Big\rangle + \p x^2 |u|^2\right\}\\
&=\inf_u \Big\{2 \Phi'(|x|^2) |x|^2 (1+ 3u_1^2u_2^2) + p_\I x^2 |u|^2 \Big\} \geq 0
\end{align*}
for all $x\in\R^2\backslash \{0\}$ and $\p\geq 0$.
On the other hand, by considering the diagonal subsystem
$$
\dot x = \F^{diag}_{(\frac 1 2 ,\frac 1 2)} = x   -u_1^2(1/\sqrt{2},0)^{tr} -u_2^2(0, 1/\sqrt{2})^{tr},
$$
if $\p <1$ \, $ (\leq \Phi'(|x|^2) \text{ for all } x\in \RR^2)$, we get,   for all $x\in \RR^2\setminus\{0\}$, 
$$\begin{array}{c}
H_{l,{ \F^{diag}_{(\frac 1 2,\frac 1 2)}}}(x,\p,\nabla W(x)) \leq \inf_u \Big\{ |x|^2 \Big( \Phi'(|x|^2)(2-u^2) + \p u^2\Big)\Big\} = -\infty,
\end{array}$$
i.e., $W$ is a $\p$-MRF  for the problem   $(l,{ \F^{diag}_{(\frac 1 2,\frac 1 2)}})$. Therefore, in view of Theorem \ref{diagonalth}, $W$ is a $\p$-MRF  for the problem \eqref{exdiag} as well.
}
\end{example}
\appendix

\section{Proof of Lemma \ref{lnear-control-affine}\label{proofsec}} 

For the reader convenience let us recall the statemen of Lemma  \ref{lnear-control-affine}:
\vskip2truemm
{\it 
 For every $r\in [0,+\infty]$
\begin{equation}\label{comb}
{\F}_{\text{\it aff}}(x,\Wbalr)\subset co~\F(x,\Ubalr)\quad \forall x\in\Omega\setminus\T.
\end{equation}
}
\vskip2truemm
We prove  this result in the case all components of the $m$-tuple $K$ are equal to $1$, i.e., $K=(1,\dots,1)$ (this assumption implies $\bar d=m=d$, see Remark \ref{rmknormd}).
Indeed, to prove the theorem when $K$ is a general $m$-tuple  of odd numbers  it is sufficient to  apply the result to the rescaled control-polynomial vector field
$$\hat\F(x,u):=\F(x,u_1^{\frac 1 K_1},\dots,u_m^{\frac 1 K_m}).$$

Fix $k\in\NN$ and denote by $\{1,-1\}^k$ the set of $k$-tuples $(s_1,\dots,s_k)$ with $s_j\in\{-1,1\}$.  
Denote by $P(S)$ the power set of a set $S$ and consider the set-valued map $S_k:\{1,-1\}\to P(\{-1,1\}^k)$  defined by
$$S_k(s)=\left\{(s_1,\dots,s_k)\in\{-1,1\}^k\mid s_1\cdots s_k=s\right\}.$$

Let us begin with a combinatorial result:
\vskip0.4truecm

{\it Claim A:} 
 {\it Let $k,d\in\NN$,   $k< d$. For every  $i_1,\dots, i_k\in\N$, $1\leq i_1<\cdots<i_k\leq d$,   and for every $s\in \{-1,1\}$
\begin{equation}\label{sum}
 \sum_{(s_1,\dots,s_d)\in S_d(s)} s_{i_1}\cdots s_{i_k} =0.
\end{equation}}

 To prove {\it Claim A},  notice that
\begin{equation}\label{zerosum}
\sum_{(s_1,\dots,s_k)\in \{-1,1\}^\k}s_1 s_2\cdots s_k=0.
\end{equation}
Now, fix $i_1,\dots, i_k\in\N$, $1\leq i_1<\cdots<i_k\leq d$ and an auxiliary $k$-uple $\bar{\mathbf  s}=(\bar s_1,\dots,\bar s_k)\in \{-1,1\}^k$.
One has
 \begin{equation*}
\#\left\{(s_1,\dots,s_d)\in \{-1,1\}^d\mid s_{i_h}=\bar s_{h};~h=1,\dots,\k\right\}=2^{d-\k}.
\end{equation*}
Therefore, by a symmetry argument,
\begin{equation}\label{card}
\#\left\{(s_1,\dots,s_d)\in S_d(s)\mid s_{i_h}=\bar s_{h};~h=1,\dots,\k\right\}=2^{d-\k-1} \quad \forall s\in\{-1,1\}.
\end{equation}
In view of (\ref{zerosum}) and of (\ref{card}),  for every $s\in \{-1,1\}$
\begin{equation*}
 \sum_{(s_1,\dots,s_d)\in S_d(s)} s_{i_1}\cdots s_{i_k} = 2^{d-k-1}\left(\sum_{(s_{i_1},\dots,s_{i_\k})\in \{-1,1\}^\k} s_{i_1}\cdots s_{i_k}\right)=0.
\end{equation*}
This concludes the proof of {\it Claim A}.
\vskip0.4truecm

\noindent
We continue the proof of Lemma \ref{lnear-control-affine} by proving  {\it Claim B} below, which  concerns the convex hull $ co~ \F(x,U_r)$.
For every integer $j\ge 1$,  let us set
 $$I_{r,j}:=\begin{cases}
             [-r^j,r^j]&\text{ if } r<+\infty\\
             \RR &\text{ if } r=+\infty.\\
            \end{cases}$$
\vskip0.4truecm
{\it Claim B:}
{\it Let $d\leq m$. For every $k\leq d$,   $i_1,\dots, i_k\in\N$, $1\leq i_1<\cdots<i_k\leq d$, and $w\in I_{r,k}$,
one has
\begin{equation}\label{component}
f_0(x)+w f_{\alpha_1,\dots,\alpha_m}(x)\in co~\F(x,U_r),
\end{equation}
where $\alpha_{j}=1$ for $j\in\{i_1,\dots,i_k\}$ and $\alpha_j=0$ otherwise.}

\vskip0.4truecm
 To prove {\it Claim  B}, denote by $s(w)$ the sign of $w$ and select from $I_{r,1}$  a set of $k$ real numbers $u_{i_1},\dots,u_{i_k}$ such that
$u_{i_1}\cdots u_{i_k} =w$. 

Define     
$$u^{(\mathbf s)}:= \sum_{j=1}^k \mathbf s_{j} |u_{i_j}| \mathbf e_{i_j} \quad \text{for every } \mathbf s:=(s_{1},\dots,s_{k}) \in S_k(s(w)).$$ 
By construction one has  $u^{(\mathbf s)}\in [-r,r]^m=U_r$ and
$$u^{(\mathbf s)}_{i_1}\cdots u^{(\mathbf s)}_{i_k}=w.$$
By {\it Claim A}, for every $h<k$  and every increasing finite subsequence \newline $i_{1}\leq i_{j_1}<\cdots<i_{j_h}\leq i_k$ of  $i_1,\dots, i_k$,    one has
$$\sum_{\mathbf s\in S_k(s(w))} u^{(\mathbf s)}_{i_{j_1}}\cdots u^{(\mathbf s)}_{i_{j_{h}}}=|u_{i_{j_{h}}}|\cdots |u_{i_{j_{h}}}| \sum_{(s_{1},\dots,s_{k})\in S_k(s)} s_{{j_{h}}}\cdots s_{{j_{h}}}=0.$$
 Notice that $2^{k-1}$ is the cardinality of $S_k(s(w))$. Hence by the definition of near-control-affine system it easily follows that
\begin{align*}
\displaystyle  \sum_{\mathbf s\in S_k(s(w))}& \frac{1}{2^{k-1}}\F(x,u^{(\mathbf s)})= f_0(x)
+ 
\\\,\\ &\sum_{h=1}^{k} \frac{1}{2^{k-1}}\left( \sum_{   i_{1}\leq i_{j_1}<\cdots<i_{j_h}\leq i_k  }  \left( \sum_{\mathbf s\in S_k(s(w))} u^{(\mathbf s)}_{i_{j_1}}\cdots u^{(\mathbf s)}_{i_{j_{h}}} \right)f_{{\bf{e}}_{i_{j_1}}+\dots+{\bf{e}}_{i_{j_h}}}(x)\right)\\\,\\
\displaystyle =&f_0(x)+\frac{1}{2^{k-1}}\left( \sum_{   i_{1}<\cdots<  i_k  }  \left( \sum_{\mathbf s\in S_k(s(w))} u^{(\mathbf s)}_{i_{1}}\cdots u^{(\mathbf s)}_{i_{{k}}} \right)f_{{\bf{e}}_{i_{1}}+\dots+{\bf{e}}_{i_{k}}}(x)\right)			\\\,\\
\displaystyle =&f_0(x)+ w~f_{\alpha_1,\dots,\alpha_m}(x),
 \end{align*}
   which  concludes the proof of {\it Claim B}.
\vskip0.4truecm

To end  the proof of Lemma \ref{lnear-control-affine}  in case $K=(1,\dots,1)$,
it suffices to remark that for every $k=1,\dots,d$,  by the definition of $\bar r$ given in \eqref{rbaldef} 
$$[-\bar r, \bar r]\subseteq M [-r^k,r^k].$$
Therefore {\it Claim B}  implies that for every 
$$w=(w_{\mathbf e_1},\dots,w_{\mathbf e_d},w_{\mathbf e_1+\mathbf e_2},w_{\mathbf e_1+\mathbf e_3},\dots,w_{\mathbf e_1+\cdots+\mathbf e_d})\in [-\bar r,\bar r]^M=\bar U_r$$
\begin{align*}
{\F}_{\text{\it aff}}(x,w)&=
&\sum_{k=1}^d \sum_{i_1<\dots<i_k} \frac{1}{M}(f_0(x)+ M w_{\mathbf e_{i_1}+\dots+\mathbf e_{i_k}} f_{\mathbf e_{i_1}+\dots+\mathbf e_{i_k}}(x))
\in co~\F(x,U_r).
\end{align*}


\begin{thebibliography}{10}

\bibitem{Akh65}
{\sc Akhiezer, N.~I.}
\newblock {\em {The classical moment problem: and some related questions in
  analysis}}, vol.~5.
\newblock Oliver \& Boyd Edinburgh, 1965.

\bibitem{aldobressan}
{\sc Bressan, A.}
\newblock Hyperimpulsive motions and controllizable coordinates for lagrangian
  systems,.
\newblock {\em Atti Accad. Naz. dei Lincei, Memorie classe di Scienze Mat. Fis.
  Nat. Serie}, Memorie, Serie VIII, Vol XIX (1990), 197--246.

\bibitem{BR88}
{\sc Bressan, A., and Rampazzo, F.}
\newblock {On differential systems with vector-valued impulsive controls}.
\newblock {\em Boll. Un. Matematica Italiana 2}, B (1988), 641--656.

\bibitem{BR10}
{\sc Bressan, A., and Rampazzo, F.}
\newblock {Moving constraints as stabilizing controls in classical mechanics}.
\newblock {\em Archive for rational mechanics and analysis 196}, 1 (2010),
  97--141.

\bibitem{Dyk90}
{\sc Dykhta, V.~A.}
\newblock {Impulse-trajectory extension of degenerate optimal control
  problems}.
\newblock {\em IMACS Ann. Comput. Appl. Math 8\/} (1990), 103--109.

\bibitem{Ego02}
{\sc Egozcue, J., et~al.}
\newblock {From a nonlinear, nonconvex variational problem to a linear, convex
  formulation}.
\newblock {\em Applied Mathematics \& Optimization 47}, 1 (2002), 27--44.

\bibitem{Fed69}
{\sc Federer, H.}
\newblock {\em {Geometric measure theory}}.
\newblock Springer New York, 1969.

\bibitem{chez}
{\sc Goncharova, E., and Staritsyn, M.}
\newblock {Optimal control of dynamical systems with polynomial impulses}.
\newblock {\em Discrete and Continuous Dynamical Systems 35}, 9 (2015),
  4367--4384.

\bibitem{Gur72}
{\sc Gurman, V.~I.}
\newblock {Optimally Controlled Processes with Unbounded Derivatives}.
\newblock {\em Automation and Remote Control 33}, 12 (1972), 1924--1930.

\bibitem{Mez04}
{\sc Meziat, R.~J.}
\newblock {Analysis of non convex polynomial programs by the method of
  moments}.
\newblock In {\em {Frontiers in global optimization}}. Springer, 2004,
  pp.~353--371.

\bibitem{MR03}
{\sc Miller, B., and Rubinovich, E.~Y.}
\newblock {\em {Impulsive control in continuous and discrete-continuous
  systems}}.
\newblock Springer, 2003.

\bibitem{Mil96}
{\sc Miller, B.~M.}
\newblock {The generalized solutions of nonlinear optimization problems with
  impulse control}.
\newblock {\em SIAM journal on control and optimization 34}, 4 (1996),
  1420--1440.

\bibitem{mot04}
{\sc Motta, M.}
\newblock {Viscosity solutions of HJB equations with unbounded data and
  characteristic points}.
\newblock {\em Applied Mathematics and Optimization 49}, 1 (2004), 1--26.

\bibitem{MR13}
{\sc Motta, M., and Rampazzo, F.}
\newblock {Asymptotic controllability and optimal control}.
\newblock {\em Journal of Differential Equations 254}, 7 (2013), 2744--2763.

\bibitem{MS14}
{\sc Motta, M., and Sartori, C.}
\newblock {On asymptotic exit-time control problems lacking coercivity}.
\newblock {\em ESAIM: Control, Optimisation and Calculus of Variations 20}, 04
  (2014), 957--982.

\bibitem{Nat57}
{\sc Natanson, I.~P.}
\newblock {Theory of functions of a real variable}.
\newblock {\em Frederick Ungar, New York (1955/1961) 1}, 1 (1957), 2--4.

\bibitem{PT09}
{\sc Pedregal, P., and Tiago, J.}
\newblock {Existence results for optimal control problems with some special
  nonlinear dependence on state and control}.
\newblock {\em SIAM Journal on Control and Optimization 48}, 2 (2009),
  415--437.

\bibitem{RS00}
{\sc Rampazzo, F., and Sartori, C.}
\newblock {Hamilton-Jacobi-Bellman equations with fast gradient-dependence}.
\newblock {\em Indiana University Mathematics Journal 49}, 3 (2000),
  1043--1078.

\bibitem{Ris65}
{\sc Rishel, R.~W.}
\newblock {An extended Pontryagin principle for control systems whose control
  laws contain measures}.
\newblock {\em Journal of the Society for Industrial \& Applied Mathematics,
  Series A: Control 3}, 2 (1965), 191--205.

\bibitem{Sho50}
{\sc Shohat, J.~A., and Tamarkin, J.~D.}
\newblock {\em {The Problem of Moments}}.
\newblock No.~1. American Mathematical Soc., 1943.

\bibitem{VP88}
{\sc Vinter, R.~B., and Pereira, F.~L.}
\newblock {A maximum principle for optimal processes with discontinuous
  trajectories}.
\newblock {\em SIAM journal on control and optimization 26}, 1 (1988),
  205--229.

\end{thebibliography}
\end{document}